\documentclass[12pt, reqno]{amsart}
\usepackage{amsmath, amsthm, amscd, amsfonts, amssymb, graphicx, color}
\usepackage[bookmarksnumbered, colorlinks, plainpages]{hyperref}
\hypersetup{colorlinks=true,linkcolor=red, anchorcolor=green,
citecolor=cyan, urlcolor=red, filecolor=magenta, pdftoolbar=true}

\textheight 22.5truecm \textwidth 14.5truecm
\setlength{\oddsidemargin}{0.35in}\setlength{\evensidemargin}{0.35in}

\setlength{\topmargin}{-.5cm}

\newtheorem{theorem}{Theorem}[section]
\newtheorem{lemma}[theorem]{Lemma}
\newtheorem{proposition}[theorem]{Proposition}
\newtheorem{corollary}[theorem]{Corollary}
\theoremstyle{definition}
\newtheorem{definition}[theorem]{Definition}
\newtheorem{example}[theorem]{Example}

\theoremstyle{remark}
\newtheorem{remark}[theorem]{Remark}
\numberwithin{equation}{section}

\begin{document}

\setcounter{page}{1}

\title[On an extension of operator transforms]
{On an extension of operator transforms}

\author[A. Zamani]
{Ali Zamani}

\address{Department of Mathematics, Farhangian University, Tehran, Iran}
\email{zamani.ali85@yahoo.com}

\subjclass[2010]{47A05, 47B49, 47A12, 47A30.}

\keywords{Polar decomposition, Duggal transform, mean transform, numerical radius, inequality.}
\begin{abstract}
We introduce the $\lambda$-mean transform $M_{\lambda}(T)$
of a Hilbert space operator $T$ as an extension of some operator transforms based on the Duggal transform $T^D$
by $M_{\lambda}(T) := \lambda T + (1-\lambda)T^D$, and present some of its essentially properties.
Among other things, we obtain estimates for the operator norm and numerical radius of
the $\lambda$-mean transform $M_{\lambda}(T)$ in terms of the original operator $T$.
\end{abstract}
\maketitle
\section{Introduction}
Let $\mathbb{B}(\mathcal{H})$ denote the $C^{\ast}$-algebra of all bounded
linear operators on a complex Hilbert space $\big(\mathcal{H}, \langle \cdot, \cdot\rangle \big)$.
Let the symbol $I$ stand for the identity operator on $\mathcal{H}$.
For every $T\in\mathbb{B}(\mathcal{H})$ its range is denoted by $\mathcal{R}(T)$,
its null space by $\mathcal{N}(T)$, and its adjoint by $T^*$.
We also let $\sigma(T)$ denote the spectrum of $T$, and $r(T)$ denote its spectral radius.
An operator $T\in\mathbb{B}(\mathcal{H})$ is partial isometry when $TT^*T = T$ (or, equivalently,
$T^*T$ is an orthogonal projection). In particular, $T$ is an isometry if $T^*T = I$,
and $T$ is unitary if it is a surjective isometry.
For an operator $T\in\mathbb{B}(\mathcal{H})$, there exists a unique polar decomposition
$T = U|T|$ (called the canonical polar decomposition), where $|T| = (T^*T )^{1/2}$
and $U$ is the appropriate partial isometry satisfying $\mathcal{N}(U) = \mathcal{N}(T)$.

The numerical range and the numerical radius of $T\in\mathbb{B}(\mathcal{H})$ are defined by
$W(T) = \big\{\langle Tx, x\rangle:\,\,\, x\in\mathcal{H},\,\|x\| =1\big\}$,
and
$\omega(T) = \sup\{|\xi|: \,\,\, \xi\in W(T)\}$,
respectively.
It is well known that $w(\cdot)$ defines a norm on
$\mathbb{B}(\mathcal{H})$ such that for all $T\in\mathbb{B}(\mathcal{H})$,
\begin{align}\label{1.1}
\max\big\{r(T), \frac{1}{2}\|T\|\big\}\leq \omega(T)\leq \|T\|.
\end{align}
It is known that $w(\cdot)$ is weakly unitarily invariant,
in the sense that for every $T\in\mathbb{B}(\mathcal{H})$ and unitary $V\in\mathbb{B}(\mathcal{H})$, we have
$w(V^*TV) = \omega(T)$. It is also known that for $T\in\mathbb{B}(\mathcal{H})$,
\begin{align}\label{1.2}
\omega(T) = \displaystyle{\sup_{\theta \in \mathbb{R}}}\big\|\mathfrak{Re}(e^{i\theta}T)\big\|.
\end{align}
For proofs and more facts about the numerical radius, we refer the reader to \cite{A.K.1, Halmos, Y.2}, and the references therein.

Let us introduce some transforms of Hilbert space operators.
Let $T = U|T|$ be the canonical polar decomposition of $T\in\mathbb{B}(\mathcal{H})$.
The Aluthge transform $\widetilde{T}$ of $T$ is defined by $\widetilde{T} := |T|^{1/2}U|T|^{1/2}$.
This transformation arose in the study of hyponormal operators \cite{Alu} and has since
been studied in many different contexts (see, e.g., \cite{Be.C.K.L, F.J.K.P, Y.1}).
The Duggal transform $T^D$ of $T$ is defined by $T^D := |T|U$, which is first referred to in \cite{F.J.K.P}.
The mean transform $\widehat{T}$ of $T$ is defined by $\widehat{T} := \frac{1}{2}(T + T^D)$.
This transform was first introduced in \cite{L.L.Y}
and has received much attention in recent years (see \cite{C.C.M, Ja.So, SJ.K.P, Y.3}).
A kind of operator transform
is the generalized mean transform
$\widehat{T}(t)$ of $T$, introduced recently in \cite{Be.C.K.L}, by
\begin{align*}
\widehat{T}(t): = \frac{1}{2}(|T|^{t}U|T|^{1-t} + |T|^{1-t}U|T|^{t}),
\end{align*}
for $t\in [0, 1/2]$. Clearly, $\widehat{T}(0)= \widehat{T}$ and $\widehat{T}(1/2)= \widetilde{T}$.

For more information about the transforms and their
properties, interested readers are referred to \cite{Be.C.K.L, C.C.M, F.J.K.P, SJ.K.P, L.L.Y}.

Now, we introduce a new transform of the given operator $T\in\mathbb{B}(\mathcal{H})$ based on the Duggal transform $T^D$.
\begin{definition}\label{D.0002}
Let $T = U|T|$ be the canonical polar decomposition of $T\in\mathbb{B}(\mathcal{H})$.
For $\lambda \in [0, 1]$, the $\lambda$-mean transform $M_{\lambda}(T)$ of $T$ is defined by
\begin{align*}
M_{\lambda}(T) := \lambda T + (1-\lambda)T^D,
\end{align*}
where $T^D = |T|U$ denotes the Duggal transform of $T$.
In particular, $M_{0}(T) = T^D$ and $M_{1/2}(T) = \widehat{T}$ is the mean transform of $T$.
\end{definition}
For $t\in (0, 1/2)$, it is so hard to find the generalized mean transform
$\widehat{T}(t)$ of the given operator $T\in\mathbb{B}(\mathcal{H})$ because it involves $|T|^t$,
and it is quite difficult to find $|T|^t$ in general. By contrast,
for $\lambda \in [0, 1]$, the $\lambda$-mean transform $M_{\lambda}(T)$ of $T$
involves $T^D$, so it is easy to get $M_{\lambda}(T)$ if we know the canonical polar decomposition of $T$.
Hence the $\lambda$-mean transform $M_{\lambda}(T)$ is more convenient than
the generalized mean transform $\widehat{T}(t)$ in the practical use.

The main aim of the present work is to investigate the $\lambda$-mean transform of Hilbert space operators.

In Section 2, by using some ideas of \cite{C.C.M, L.L.Y}, we first give basic properties of the $\lambda$-mean transform $M_{\lambda}(T)$
and then we provide various connections between $T$ and $M_{\lambda}(T)$ in some particular cases.

In Section 3 we derive some norm inequalities and equalities for
the $\lambda$-mean transform $M_{\lambda}(T)$ of $T$.
In particular, we obtain new estimates for the operator norm
of the mean transform of Hilbert space operators.

In the last section many numerical radius inequalities for
the $\lambda$-mean transform in terms of the original operator are given.
Our results generalize recent the norm and numerical radius inequalities of
the mean transform of operators due to Chabbabi et al. \cite{C.C.M}.
Finally, we show that the numerical radius of the sequence of the iterated $\lambda$-mean
transform of a rank one operator converges to its spectral radius.
\section{Basic properties of the $\lambda$-mean transform $M_{\lambda}(T)$}
In this section, we give some properties of the $\lambda$-mean transform of Hilbert space operators.

Recall that an operator $T\in\mathbb{B}(\mathcal{H})$ is said to be normal if $TT^*=T^*T$,
quasinormal if $T$ commutes with $T^*T$, and hyponormal if $TT^*\leq T^*T$.
It is easy to see that normal $\Longrightarrow$ quasinormal $\Longrightarrow$ hyponormal
and the inverse implications do not hold (see, for example, \cite{Halmos}).
\begin{proposition}\label{P.1002}
Let $T\in\mathbb{B}(\mathcal{H})$.
Then the following conditions are equivalent:
\begin{itemize}
\item[(i)] $T$ is quasinormal.
\item[(ii)] $M_{\lambda}(T) = T$ for all $\lambda \in [0, 1]$.
\item[(iii)] $M_{\lambda}(T) = T$ for some $\lambda \in [0, 1)$.
\end{itemize}
\end{proposition}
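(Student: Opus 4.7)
The plan is to reduce the three-way equivalence to the single identity $T = T^{D}$, since once this identity is linked to quasinormality the rest is a matter of collecting constants. I would first record (either as a preliminary lemma or inline in the proof) the standard fact that for the canonical polar decomposition $T = U|T|$, quasinormality of $T$ is equivalent to the commutation relation $U|T| = |T|U$, which is in turn just the statement $T = T^{D}$.

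Establishing that equivalence is the main technical step. In the direction "$U|T|=|T|U \Rightarrow$ quasinormal", I would compute $T|T|^2 = U|T|^3 = |T|^2U|T| = |T|^2T$ by pushing $U$ through $|T|^2$ using the hypothesis. For the converse, from $T$ quasinormal, $T$ commutes with $|T|^2$ and hence, by the continuous functional calculus, with $|T|$; rewriting $T|T| = |T|T$ as $U|T|^2 = |T|U|T|$ gives $U|T| = |T|U$ on the closure of $\mathcal{R}(|T|)$, and the kernel side is handled via $\mathcal{N}(U) = \mathcal{N}(|T|) = \mathcal{N}(T)$ from the canonical polar decomposition, so both $U|T|$ and $|T|U$ annihilate $\mathcal{N}(|T|)$. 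The orthogonal decomposition $\mathcal{H} = \mathcal{N}(|T|) \oplus \overline{\mathcal{R}(|T|)}$ then yields $U|T| = |T|U$ on all of $\mathcal{H}$. This is where I expect the real work to live; everything else is bookkeeping.

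Given this equivalence, the three implications are short. For (i)$\Rightarrow$(ii), if $T$ is quasinormal then $T^{D} = T$ by the lemma, so
\begin{equation*}
M_{\lambda}(T) = \lambda T + (1-\lambda)T^{D} = \lambda T + (1-\lambda)T = T
\end{equation*}
for every $\lambda \in [0,1]$. The implication (ii)$\Rightarrow$(iii) is immediate, e.g. by choosing $\lambda = 0$. For (iii)$\Rightarrow$(i), the assumption $M_{\lambda}(T) = T$ for some $\lambda \in [0,1)$ reads $\lambda T + (1-\lambda)T^{D} = T$, i.e.\ $(1-\lambda)(T^{D} - T) = 0$; since $1-\lambda \neq 0$, we conclude $T^{D} = T$, and the lemma gives quasinormality of $T$.

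The only subtle point I anticipate is ensuring that the passage from $U|T|^2 = |T|U|T|$ to $U|T| = |T|U$ is handled correctly on $\mathcal{N}(|T|)$; this is exactly where the canonical choice of $U$ (as opposed to an arbitrary partial isometry factor) is used, and it should be flagged in the write-up.
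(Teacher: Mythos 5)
Your proof is correct, and its skeleton --- reducing all three conditions to the single identity $T = T^{D}$, i.e.\ $U|T| = |T|U$ --- is the same as the paper's; in particular your computation $T|T|^2 = U|T|^3 = |T|^2U|T| = |T|^2T$ for the direction $T^{D}=T \Rightarrow$ quasinormal is exactly the paper's proof of (iii)$\Rightarrow$(i). Where you genuinely diverge is in the hard direction, quasinormal $\Rightarrow T^{D}=T$. The paper works at the level of the partial isometry: from $(T^*T - TT^*)T = 0$ it writes $TT^* = U|T|^2U^*$ and, using the partial-isometry identities $U^*UU^* = U^*$ and $\mathcal{R}(U^*) = \overline{\mathcal{R}(|T|)}$, derives that $U$ commutes with $|T|^2$, and only then invokes the functional calculus (square-root trick) to conclude $U|T| = |T|U$. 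You instead invoke the functional calculus at the outset --- $T$ commutes with $|T|^2$, hence with $|T|$ --- which converts quasinormality into $(U|T| - |T|U)|T| = 0$, and you finish with the orthogonal decomposition $\mathcal{H} = \mathcal{N}(|T|) \oplus \overline{\mathcal{R}(|T|)}$ together with the canonical property $\mathcal{N}(U) = \mathcal{N}(|T|)$, which kills both $U|T|$ and $|T|U$ on the kernel summand. Your route is shorter and avoids the somewhat opaque $UU^*$-manipulations; its cost is that correctness hinges on the kernel/range bookkeeping you flag at the end (with a non-canonical partial isometry factor the argument would indeed break, exactly as you note --- the same is true of the paper's argument, which also uses the canonical choice). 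Both proofs ultimately consume the same two ingredients, the functional-calculus commutation lemma and the canonical choice of $U$, just applied in opposite order; the difference is one of organization rather than substance, but yours is the cleaner write-up of the key implication.
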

\begin{proof}
Let $T = U|T|$ be the canonical polar decomposition of $T$.

(i)$\Rightarrow$(ii)
Let $T$ be quasinormal.
The following fact may be known in the literature but we present its proof for readers' convenience.

Since $T$ commutes with $T^*T$, we have $(T^*T - TT^*)T = 0$ and then $(|T|^2 - |T^*|^2)U|T| = 0$.
Thus $(|T|^2 - U|T|^2U^*)U|T| = 0$. Therefore
\begin{align*}
\mathcal{R}(U^*) = \mathcal{R}(|T|) \subseteq \mathcal{N}\big((|T|^2 - U|T|^2U^*)U\big),
\end{align*}
and so $(|T|^2 - U|T|^2U^*)UU^* = 0$. Since $U^*UU^* = U^*$, we obtain $|T|^2UU^* - U|T|^2U^*=0$
and hence $(|T|^2UU^* - U|T|^2U^*)U = 0$. From this it follows that $|T|^2U - U|T|^2 = 0$,
or equivalently, $|T|$ commutes with $U$.
Thus $T^D = T$. This implies $M_{\lambda}(T) = \lambda T + (1-\lambda)T^D = T$ for all $\lambda \in [0, 1]$.

(ii)$\Rightarrow$(iii) The implication is trivial.

(iii)$\Rightarrow$(i) Suppose that $M_{\lambda}(T) = T$ for some $\lambda \in [0, 1)$.
Then $T^D = T$, or equivalently, $|T|U = U|T|$. Therefore,
\begin{align*}
T(T^*T) = U|T|^3 = |T|U|T|^2 = |T|^2U|T| = (T^*T)T,
\end{align*}
and so $T$ is quasinormal.
\end{proof}
For $\lambda \in [0, 1]$, the iterated $\lambda$-mean transforms of an operator $T\in\mathbb{B}(\mathcal{H})$
are the operators $M^{(n)}_{\lambda}(T)\,(n\geq 0)$, defined by setting $M^{(0)}_{\lambda}(T) = T$ and letting
$M^{(n+1)}_{\lambda}(T)$ be the $\lambda$-mean transform of $M^{(n)}_{\lambda}(T)$.
\begin{remark}\label{R.2002}
Let $T\in\mathbb{B}(\mathcal{H})$ and let $\lambda \in [0, 1)$.
If $T$ is quasinormal, the by Proposition \ref{P.1002} we have $M_{\lambda}(T) = T$.
Therefore $M^{(n)}_{\lambda}(T) = T$ for every $n\in \mathbb{Z}_{+}$.
\end{remark}
\begin{proposition}\label{P.3002}
Let $T\in\mathbb{B}(\mathcal{H})$.
Then the following conditions are equivalent:
\begin{itemize}
\item[(i)] $T$ is quasinormal.
\item[(ii)] $M_{\lambda}(T) = T^D$ for all $\lambda \in [0, 1]$.
\item[(iii)] $M_{\lambda}(T) = T^D$ for some $\lambda \in (0, 1]$.
\end{itemize}
\end{proposition}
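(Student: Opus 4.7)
The plan is to reduce Proposition \ref{P.3002} to the already-established Proposition \ref{P.1002} by observing that the condition $M_{\lambda}(T) = T^{D}$ collapses, for $\lambda \neq 0$, to the single identity $T = T^{D}$, which in turn is the $\lambda = 0$ case of the equivalence proved in Proposition \ref{P.1002}.

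First I would dispatch the implication (i)$\Rightarrow$(ii). If $T$ is quasinormal, then Proposition \ref{P.1002} (applied, say, with $\lambda = 0$) yields $T^{D} = T$; substituting this into the definition $M_{\lambda}(T) = \lambda T + (1-\lambda)T^{D}$ gives $M_{\lambda}(T) = \lambda T^{D} + (1-\lambda)T^{D} = T^{D}$ for every $\lambda \in [0,1]$. The step (ii)$\Rightarrow$(iii) is trivial.

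For (iii)$\Rightarrow$(i), assume $M_{\lambda}(T) = T^{D}$ for some $\lambda \in (0, 1]$. Rewriting the equality as
\begin{equation*}
\lambda T + (1-\lambda) T^{D} = T^{D},
\end{equation*}
one obtains $\lambda T = \lambda T^{D}$, and since $\lambda \neq 0$ this forces $T = T^{D}$. Hence $M_{0}(T) = T^{D} = T$, so by Proposition \ref{P.1002} (with $\lambda = 0 \in [0,1)$) the operator $T$ is quasinormal.

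There is no genuine obstacle here; the only subtle point is the necessity of excluding $\lambda = 0$ in (iii), since $M_{0}(T) = T^{D}$ is a tautology and would not carry any information. This is precisely the dual of the restriction $\lambda \in [0,1)$ appearing in Proposition \ref{P.1002} (iii), where $\lambda = 1$ must be excluded because $M_{1}(T) = T$ trivially.
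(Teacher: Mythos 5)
Your proof is correct and complete. Where the paper simply omits the argument as ``similar to that of Proposition \ref{P.1002}'' --- i.e.\ it intends the reader to re-run that proof with $T^D$ on the right-hand side (quasinormal $\Rightarrow U|T| = |T|U \Rightarrow T^D = T \Rightarrow M_{\lambda}(T)=T^D$, and conversely the collapse $\lambda T = \lambda T^D$ followed by the computation $T(T^*T) = U|T|^3 = |T|U|T|^2 = |T|^2U|T| = (T^*T)T$) --- you instead reduce the statement formally to Proposition \ref{P.1002} itself: after cancelling the nonzero scalar $\lambda$, condition (iii) collapses to the single identity $T = T^D$, and that identity is exactly the $\lambda = 0$ instance of Proposition \ref{P.1002}, used in both directions (quasinormal $\Rightarrow M_0(T) = T^D = T$ via \ref{P.1002}(i)$\Rightarrow$(ii), and $M_0(T) = T$ with $0 \in [0,1)$ $\Rightarrow$ quasinormal via \ref{P.1002}(iii)$\Rightarrow$(i)). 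This buys a proof with no polar-decomposition or commutation computation at all, at the mild price of not being self-contained; there is no circularity, since Proposition \ref{P.1002} is established independently and earlier. Your closing observation --- that $\lambda = 0$ must be excluded from (iii) here because $M_0(T) = T^D$ is a tautology, dually to the exclusion of $\lambda = 1$ in Proposition \ref{P.1002}(iii) --- is also accurate and is precisely the structural reason the two propositions have mirrored hypotheses.
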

\begin{proof}
The proof is similar to that of proposition \ref{P.1002} and so we omit it.
\end{proof}
\begin{remark}\label{R.3.1.002}
Let $\lambda \in [0, 1]$. By \cite[p.79]{L.L.Y},
the Duggal transform map $T\longrightarrow T^D$ is $(\|\cdot\|, SOT)$-continuous
on $\mathbb{B}(\mathcal{H})$. Therefore, since $M_{\lambda}(T) = \lambda T + (1-\lambda)T^D$,
the $\lambda$-mean transform map $T\longrightarrow M_{\lambda}(T)$
is also $(\|\cdot\|, SOT)$-continuous on $\mathbb{B}(\mathcal{H})$.
\end{remark}
Next, we recall the class of unilateral weighted shifts. For $\alpha \equiv \{\alpha_n\}^{\infty}_{n=0}$
a bounded sequence of positive numbers (called weights), let
\begin{align*}
\mathcal{W}_\alpha = \,\mbox{shift}(\alpha_0, \alpha_1, \cdots): \ell^2(\mathbb{Z}_{+})\longrightarrow \ell^2(\mathbb{Z}_{+})
\end{align*}
be the associated unilateral weighted shift, defined by $\mathcal{W}_\alpha e_{n}: = \alpha_{n}e_{n+1}$, where
$\{e_n\}^{\infty}_{n=0}$ is the canonical orthonormal basis in $\ell^2(\mathbb{Z}_{+})$.
For a weighted shift $\mathcal{W}_\alpha$, it is easy to see that $\mathcal{W}_\alpha$
is hyponormal if and only if $\{\alpha_n\}^{\infty}_{n=0}$ is an increasing sequence.
The spectral radius and spectrum of $\mathcal{W}_\alpha$ are well known:
\begin{align}\label{P.4002.1}
r(\mathcal{W}_\alpha) = \displaystyle{\lim_{n\rightarrow +\infty}}\Big(\displaystyle{\sup_{k\in \mathbb{N}}}
\big(\alpha_{k}\ldots\alpha_{k+n-1}\big)\Big)^{1/n}
\end{align}
and
\begin{align}\label{P.4002.2}
\sigma(\mathcal{W}_\alpha) = \big\{\gamma \in \mathbb{C}: \, |\gamma| \leq r(\mathcal{W}_\alpha)\big\}.
\end{align}
The Aluthge transform $\widetilde{\mathcal{W}_\alpha}$ of $\mathcal{W}_\alpha$ is also a unilateral weighted shift:
\begin{align}\label{P.4002.3}
\widetilde{\mathcal{W}_\alpha} = \,\mbox{shift}(\sqrt{\alpha_0\alpha_1}, \sqrt{\alpha_1\alpha_2}, \ldots).
\end{align}
For proofs and more facts about the weighted shift operators, we refer the reader to \cite{Halmos, Shie}.
\begin{proposition}\label{P.4002}
Let $\alpha \equiv \{\alpha_n\}^{\infty}_{n=0}$ be a sequence of positive numbers, and let
$\mathcal{W}_\alpha$ be the associate weighted shift. For every $\lambda \in [0, 1]$,
the following statements are equivalent:
\begin{itemize}
\item[(i)] $M_{\lambda}(\mathcal{W}_\alpha)$ is hyponormal.
\item[(ii)] $\lambda\big(\alpha_n - \alpha_{n+1}\big) \leq (1-\lambda)\big(\alpha_{n+2} - \alpha_{n+1}\big)$
for all $n\in \mathbb{Z}_{+}$.
\end{itemize}
\end{proposition}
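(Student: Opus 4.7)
The plan is to reduce everything to the standard hyponormality test for weighted shifts and then rearrange the resulting inequality into the desired form.

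First I would compute the canonical polar decomposition of $\mathcal{W}_\alpha$. Since the weights are positive, $\mathcal{W}_\alpha^*\mathcal{W}_\alpha e_n = \alpha_n^2 e_n$, so $|\mathcal{W}_\alpha|$ is the diagonal operator with entries $\alpha_n$, and the partial isometry $U$ is forced to be the (unweighted) unilateral shift $Ue_n = e_{n+1}$. Consequently
\begin{align*}
\mathcal{W}_\alpha^D e_n \;=\; |\mathcal{W}_\alpha|Ue_n \;=\; \alpha_{n+1}e_{n+1},
\end{align*}
so $\mathcal{W}_\alpha^D = \mathrm{shift}(\alpha_1,\alpha_2,\ldots)$.

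Next, since $M_\lambda$ is linear, I would combine these to obtain that $M_\lambda(\mathcal{W}_\alpha)$ is itself a unilateral weighted shift, namely
\begin{align*}
M_\lambda(\mathcal{W}_\alpha) \;=\; \mathrm{shift}(\beta_0,\beta_1,\ldots), \qquad \beta_n := \lambda\alpha_n + (1-\lambda)\alpha_{n+1}.
\end{align*}
Note that the $\beta_n$ are still positive (as a convex combination of positive numbers), so the previous polar-decomposition computation applies to $M_\lambda(\mathcal{W}_\alpha)$ as well.

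Now I would invoke the fact recorded in the paragraph preceding equation \eqref{P.4002.1}: a weighted shift is hyponormal precisely when its weight sequence is monotone increasing. Applying this criterion to $\{\beta_n\}$, hyponormality of $M_\lambda(\mathcal{W}_\alpha)$ is equivalent to $\beta_n \le \beta_{n+1}$ for all $n\in\mathbb{Z}_+$, i.e.
\begin{align*}
\lambda\alpha_n + (1-\lambda)\alpha_{n+1} \;\le\; \lambda\alpha_{n+1} + (1-\lambda)\alpha_{n+2}.
\end{align*}
Rearranging the $\lambda$ terms to the left and the $(1-\lambda)$ terms to the right yields the stated inequality $\lambda(\alpha_n - \alpha_{n+1}) \le (1-\lambda)(\alpha_{n+2}-\alpha_{n+1})$.

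The proof is essentially a direct calculation with no real obstacle; the only point that deserves care is verifying that $U$ arising from the polar decomposition really is the unweighted shift (which uses positivity of the weights to guarantee injectivity of $\mathcal{W}_\alpha$ and hence of $U$), and that the convex combination $\beta_n$ remains positive so the hyponormality criterion for weighted shifts applies cleanly to $M_\lambda(\mathcal{W}_\alpha)$.
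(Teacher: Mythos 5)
Your proof is correct and follows essentially the same route as the paper's: identify the polar decomposition of $\mathcal{W}_\alpha$, observe that $M_{\lambda}(\mathcal{W}_\alpha)$ is the weighted shift with weights $\lambda\alpha_n + (1-\lambda)\alpha_{n+1}$, and apply the monotone-weights criterion for hyponormality. Your added checks (positivity of the new weights, identification of the partial isometry $U$) are sensible refinements of the same argument, not a different approach.
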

\begin{proof}
We first note that the polar decomposition of $\mathcal{W}_\alpha$ is $U|\mathcal{W}_\alpha|$,
where $U$ is the unilateral shift and $|\mathcal{W}_\alpha|=\mbox{diag}(\alpha_0, \alpha_1, \ldots)$.
Hence the $\lambda$-mean transform of $\mathcal{W}_\alpha$ is the following weighted shift
operator:
\begin{align}\label{I.P.4002}
M_{\lambda}(\mathcal{W}_\alpha) e_{n} = \big(\lambda \alpha_n + (1-\lambda)\alpha_{n+1}\big)e_{n+1} \qquad (n\in \mathbb{Z}_{+}).
\end{align}
Therefore, $M_{\lambda}(\mathcal{W}_\alpha)$ is hyponormal if and only if
\begin{align*}
\lambda \alpha_n + (1-\lambda)\alpha_{n+1} \leq \lambda \alpha_{n+1} + (1-\lambda)\alpha_{n+2}
\end{align*}
for all $n\in \mathbb{Z}_{+}$, or equivalently,
\begin{align*}
\lambda\big(\alpha_n - \alpha_{n+1}\big) \leq (1-\lambda)\big(\alpha_{n+2} - \alpha_{n+1}\big)
\end{align*}
for all $n\in \mathbb{Z}_{+}$.
\end{proof}
\begin{remark}\label{R.5002}
Let $\alpha \equiv \{\alpha_n\}^{\infty}_{n=0}$ be a sequence of positive numbers, and let
$\mathcal{W}_\alpha$ be the associate weighted shift.
For every $\lambda \in [0, 1]$, if $\mathcal{W}_\alpha$ is hyponormal, then $\alpha_n \leq \alpha_{n+1}$ for all $n\in \mathbb{Z}_{+}$.
Therefore
$\lambda\big(\alpha_n - \alpha_{n+1}\big) \leq (1-\lambda)\big(\alpha_{n+2} - \alpha_{n+1}\big)$
for all $n\in \mathbb{Z}_{+}$. So, by proposition \ref{P.4002}, $M_{\lambda}(\mathcal{W}_\alpha)$ is hyponormal.
The converse does not hold in general. For example, let $\mathcal{W}_\beta$
be the unilateral weighted shift defined by $\beta \equiv \{\beta_n\}^{\infty}_{n=0}$, where
$\beta_n : =\left\{\begin{array}{ll}
1 &n\neq1\\
1/2 &n=1\end{array}\right.$ and let $\lambda = 1/3$.
Hence, by Proposition \ref{P.4002}, we know that $M_{\lambda}(\mathcal{W}_\beta)$ is hyponormal.
However, $\mathcal{W}_\beta$ is not hyponormal since $\beta_n \leq \beta_{n+1}$ does not hold.
\end{remark}
\begin{proposition}\label{P.4n002}
Let $\alpha \equiv \{\alpha_n\}^{\infty}_{n=0}$ be a sequence of positive numbers, and let
$\mathcal{W}_\alpha$ be the associate weighted shift.
For every $\lambda \in [0, 1)$, the following statements hold.
\begin{itemize}
\item[(i)] If $\alpha$ is monotone decreasing, then
\begin{align*}
\displaystyle{\lim_{m\rightarrow +\infty}} M^{(m)}_{\lambda}(\mathcal{W}_\alpha)
= (\inf_{n\geq0}\alpha_n)\,\mbox{shift}(1, 1, \cdots).
\end{align*}
\item[(ii)] If $\alpha$ is monotone increasing, then
\begin{align*}
\displaystyle{\lim_{m\rightarrow +\infty}} M^{(m)}_{\lambda}(\mathcal{W}_\alpha)
= (\sup_{n\geq0}\alpha_n)\,\mbox{shift}(1, 1, \cdots).
\end{align*}
\end{itemize}
\end{proposition}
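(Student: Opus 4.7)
The plan is to express each iterate as a weighted shift, derive a closed-form binomial expression for the weights, and then bound everything by a standard tail estimate.

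\emph{Reduction to a scalar recursion.} From formula \eqref{I.P.4002}, the $\lambda$-mean transform of a weighted shift is again a unilateral weighted shift, so iterating gives $M^{(m)}_{\lambda}(\mathcal{W}_\alpha) = \mathcal{W}_{\alpha^{(m)}}$, where $\alpha^{(0)} = \alpha$ and
\begin{align*}
\alpha^{(m+1)}_n \,=\, \lambda \alpha^{(m)}_n + (1-\lambda)\alpha^{(m)}_{n+1}.
\end{align*}
Each weight $\alpha^{(m)}_n$ stays positive and bounded by $\|\mathcal{W}_\alpha\|$, and a straightforward induction confirms that the monotonicity of $\alpha$ (either decreasing or increasing) is inherited by every $\alpha^{(m)}$. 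Since for bounded weight sequences $\|\mathcal{W}_\beta - \mathcal{W}_\gamma\| = \sup_n |\beta_n - \gamma_n|$, statement (i) is equivalent to $\sup_n |\alpha^{(m)}_n - c| \to 0$ with $c = \inf_{n\geq 0}\alpha_n$, and (ii) is equivalent to $\sup_n |\alpha^{(m)}_n - s| \to 0$ with $s = \sup_{n\geq 0}\alpha_n$.

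\emph{Closed form.} An induction on $m$ using Pascal's identity yields
\begin{align*}
\alpha^{(m)}_n \,=\, \sum_{k=0}^{m}\binom{m}{k}\lambda^{m-k}(1-\lambda)^{k}\,\alpha_{n+k},
\end{align*}
whose coefficients are nonnegative and sum to $1$. Thus $\alpha^{(m)}_n$ is a convex average of the tail $(\alpha_{n+k})_{k=0}^{m}$.

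\emph{Case (i).} By monotonicity of $\alpha^{(m)}$, $\sup_n(\alpha^{(m)}_n - c) = \alpha^{(m)}_0 - c = \sum_{k=0}^{m}\binom{m}{k}\lambda^{m-k}(1-\lambda)^{k}(\alpha_k - c)$. Given $\varepsilon>0$, choose $N$ with $\alpha_k - c < \varepsilon$ for $k \geq N$; splitting the sum at $k = N$,
\begin{align*}
\alpha^{(m)}_0 - c \,\leq\, (\alpha_0 - c)\sum_{k=0}^{N-1}\binom{m}{k}\lambda^{m-k}(1-\lambda)^{k} \,+\, \varepsilon.
\end{align*}
For each fixed $k < N$ the quantity $\binom{m}{k}\lambda^{m-k}(1-\lambda)^{k}$ is dominated by $m^{k}\lambda^{m-k}$, which tends to $0$ as $m\to\infty$ because $\lambda < 1$; so the (finite) sum over $k < N$ vanishes, giving $\limsup_{m}(\alpha^{(m)}_0 - c) \leq \varepsilon$, and letting $\varepsilon \to 0$ proves (i).

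\emph{Case (ii) and the crux.} Case (ii) is handled symmetrically by applying the same argument to the nonnegative, monotone-decreasing-in-$n$ sequence $s - \alpha^{(m)}_n$, yielding $s - \alpha^{(m)}_0 \to 0$. The main obstacle I anticipate is passing from pointwise convergence of weights to operator-norm (i.e., uniform in $n$) convergence; this is where the inherited monotonicity becomes essential, reducing the $\sup_n$ to the single numerical sequence $\alpha^{(m)}_0$. The polynomial-times-geometric decay of $m^{k}\lambda^{m-k}$ for fixed $k$ is the analytic heart of the argument, and it is exactly what breaks at $\lambda = 1$, justifying the hypothesis $\lambda \in [0,1)$.
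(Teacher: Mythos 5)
Your proposal is correct and follows essentially the same route as the paper's proof: the binomial closed form for the iterated weights, reduction of the operator norm to the weight at $n=0$ via the inherited monotonicity, and a head/tail splitting of the sum in which the tail is small by choice of cutoff and the head vanishes because geometric decay beats polynomial growth. The only difference is cosmetic: your termwise bound $\binom{m}{k}\lambda^{m-k}(1-\lambda)^{k}\leq m^{k}\lambda^{m-k}$ is a bit cleaner than the paper's estimate $\max\{\lambda^{m},(1-\lambda)^{m}\}\,k\,\frac{m!}{(m-k)!}$, and it also degenerates correctly at $\lambda=0$, where the paper's displayed bound does not actually tend to zero (though the head sum is identically zero there, so nothing is lost).
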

\begin{proof}
(i) We first exhibit the $m$-th iterated $\lambda$-mean transform $M^{(m)}_{\lambda}(\mathcal{W}_\alpha)$.
By (\ref{I.P.4002}), we obtain that
\begin{align*}
M^{(2)}_{\lambda}(\mathcal{W}_\alpha) e_{n}
= \Big(\lambda \big(\lambda \alpha_n + (1-\lambda)\alpha_{n+1}\big)
+ (1-\lambda)\big(\lambda \alpha_{n+1} + (1-\lambda)\alpha_{n+2}\big)\Big)e_{n+1}
\end{align*}
and so
\begin{align*}
M^{(2)}_{\lambda}(\mathcal{W}_\alpha) e_{n}
= \Big(\lambda^2 \alpha_n + 2\lambda(1-\lambda)\alpha_{n+1} + (1-\lambda)^2\alpha_{n+2}\Big)e_{n+1}. \qquad (n\in \mathbb{Z}_{+})
\end{align*}
Induction on $m$ shows therefore that
\begin{align}\label{P.4n002.I1}
M^{(m)}_{\lambda}(\mathcal{W}_\alpha) e_{n}
= \Big(\sum_{i=0}^{m}\Big(\begin{array}{c}
m\\i\\ \end{array}\Big)
\lambda^{m-i}(1-\lambda)^i\alpha_{n+i}\Big)e_{n+1}. \qquad (n\in \mathbb{Z}_{+})
\end{align}
On the other hand, since $\alpha$ is monotone decreasing, we have
$\displaystyle{\inf_{n\geq0}}\alpha_n < \alpha_{i}$ for all $i$.
Therefore,
\begin{align*}
0 &\leq\sum_{i=0}^{m}\Big(\begin{array}{c}
m\\i\\ \end{array}\Big)
\lambda^{m-i}(1-\lambda)^i(\alpha_{i} - \inf_{n\geq0}\alpha_n)
\\&=  \sum_{i=0}^{m}\Big(\begin{array}{c}
m\\i\\ \end{array}\Big)
\lambda^{m-i}(1-\lambda)^i\alpha_{i}
- \inf_{n\geq0}\alpha_n\Big(\sum_{i=0}^{m}\Big(\begin{array}{c}
m\\i\\ \end{array}\Big)
\lambda^{m-i}(1-\lambda)^i\Big)
\\& = \sum_{i=0}^{m}\Big(\begin{array}{c}
m\\i\\ \end{array}\Big)
\lambda^{m-i}(1-\lambda)^i\alpha_{i}
- \inf_{n\geq0}\alpha_n,
\end{align*}
so (\ref{P.4n002.I1}) implies
\begingroup\makeatletter\def\f@size{10}\check@mathfonts
\begin{align}\label{P.4n002.I2}
\Big\|M^{(m)}_{\lambda}(\mathcal{W}_\alpha)
-(\inf_{n\geq0}\alpha_n)\,\mbox{shift}(1, 1, \cdots)\Big\| = \sum_{i=0}^{m}\Big(\begin{array}{c}
m\\i\\ \end{array}\Big)
\lambda^{m-i}(1-\lambda)^i(\alpha_{i} - \inf_{n\geq0}\alpha_n).
\end{align}
\endgroup
Also, for every $1\leq k \leq m$, we have
\begin{align*}
0 &\leq \sum_{i=k}^{m}\Big(\begin{array}{c}
m\\i\\ \end{array}\Big)
\lambda^{m-i}(1-\lambda)^i(\alpha_{i} - \inf_{n\geq0}\alpha_n)
\\& \leq (\alpha_{k} - \inf_{n\geq0}\alpha_n)\sum_{i=k}^{m}\Big(\begin{array}{c}
m\\i\\ \end{array}\Big)
\lambda^{m-i}(1-\lambda)^i
\\& \leq (\alpha_{k} - \inf_{n\geq0}\alpha_n)\sum_{i=0}^{m}\Big(\begin{array}{c}
m\\i\\ \end{array}\Big)
\lambda^{m-i}(1-\lambda)^i,
\end{align*}
whence
\begin{align}\label{P.4n002.I3}
0 \leq \sum_{i=k}^{m}\Big(\begin{array}{c}
m\\i\\ \end{array}\Big)
\lambda^{m-i}(1-\lambda)^i(\alpha_{i} - \inf_{n\geq0}\alpha_n)
\leq (\alpha_{k} - \inf_{n\geq0}\alpha_n).
\end{align}
Further,
\begin{align*}
0 &\leq \sum_{i=0}^{k-1}\Big(\begin{array}{c}
m\\i\\ \end{array}\Big)
\lambda^{m-i}(1-\lambda)^i(\alpha_{i} - \inf_{n\geq0}\alpha_n)
\\& \leq (\alpha_{0} - \inf_{n\geq0}\alpha_n)\sum_{i=0}^{k-1}\Big(\begin{array}{c}
m\\i\\ \end{array}\Big)
\lambda^{m-i}(1-\lambda)^i
\\& \leq (\alpha_{0} - \inf_{n\geq0}\alpha_n)\max\big\{\lambda^m, (1-\lambda)^m\big\}
\sum_{i=0}^{k-1}\Big(\begin{array}{c}
m\\i\\ \end{array}\Big)
\\& \leq (\alpha_{0} - \inf_{n\geq0}\alpha_n)\max\big\{\lambda^m, (1-\lambda)^m\big\}
k\Big(\begin{array}{c}
m\\k\\ \end{array}\Big)
\\& \leq (\alpha_{0} - \inf_{n\geq0}\alpha_n)\max\big\{\lambda^m, (1-\lambda)^m\big\}
k\frac{m!}{(m-k)!},
\end{align*}
and so
\begingroup\makeatletter\def\f@size{10}\check@mathfonts
\begin{align}\label{P.4n002.I4}
0 \leq \sum_{i=0}^{k-1}\Big(\begin{array}{c}
m\\i\\ \end{array}\Big)
\lambda^{m-i}(1-\lambda)^i(\alpha_{i} - \inf_{n\geq0}\alpha_n)
\leq (\alpha_{0} - \inf_{n\geq0}\alpha_n)\max\big\{\lambda^m, (1-\lambda)^m\big\}
k\frac{m!}{(m-k)!}.
\end{align}
\endgroup
Now, by (\ref{P.4n002.I2}), (\ref{P.4n002.I3}) and (\ref{P.4n002.I4}), we conclude that
\begin{align*}
\displaystyle{\lim_{m\rightarrow +\infty}}\Big\|M^{(m)}_{\lambda}(\mathcal{W}_\alpha)
-(\inf_{n\geq0}\alpha_n)\,\mbox{shift}(1, 1, \cdots)\Big\| = 0,
\end{align*}
and hence $\displaystyle{\lim_{m\rightarrow +\infty}} M^{(m)}_{\lambda}(\mathcal{W}_\alpha)
= (\inf_{n\geq0}\alpha_n)\,\mbox{shift}(1, 1, \cdots)$.

The proof of (ii) is so similar to that of (i) that we omit it.
\end{proof}
Let $T\in\mathbb{B}(\mathcal{H})$ and let $\lambda \in (0, 1)$.
Since $\sigma(T^D) = \sigma(T)$ (see, for example, \cite{Y.1}) and $M_{\lambda}(T) = \lambda T + (1-\lambda)T^D$,
one might be tempted to guess that $\sigma(T) \subseteq \sigma\big(M_{\lambda}(T)\big)$.
But, below, we will show that there exists an operator $T\in\mathbb{B}(\mathcal{H})$,
which neither $\sigma(T) \subseteq \sigma\big(M_{\lambda}(T)\big)$
nor $\sigma\big(M_{\lambda}(T)\big) \subseteq \sigma(T)$.
\begin{example}\label{E.6002}
Let $T = \begin{bmatrix}
0 & 0 & 1 & 0\\
0 & 0 & 0 & 2\\
0 & 0 & 0 & 0\\
0 & 0 & 0 & 0
\end{bmatrix}$. It is easy to see that
$\sigma(T) = \{0\}$.
Also, simple computations show that
$T^D = \begin{bmatrix}
0 & 0 & 0 & 0\\
0 & 0 & 0 & 0\\
1 & 0 & 0 & 0\\
0 & 2 & 0 & 0
\end{bmatrix}$.
Therefore for $\lambda \in(0, 1)$, we have
\begin{align*}
M_{\lambda}(T) = \begin{bmatrix}
0 & 0 & \lambda & 0\\
0 & 0 & 0 & 2\lambda\\
1-\lambda & 0 & 0 & 0\\
0 & 2(1-\lambda) & 0 & 0
\end{bmatrix}.
\end{align*}
Finally, a direct calculation shows that
$\sigma\big(M_{\lambda}(T)\big) = \big\{\pm \sqrt{\lambda-\lambda^2}, \pm 2\sqrt{\lambda-\lambda^2}\big\}$.
\end{example}
For the unilateral weighted shifts, we have the following result.
\begin{theorem}\label{T.7002}
Let $\alpha \equiv \{\alpha_n\}^{\infty}_{n=0}$ be a sequence of positive numbers, and let
$\mathcal{W}_\alpha$ be the associate weighted shift. For every $\lambda \in (0, 1)$,
\begingroup\makeatletter\def\f@size{10}\check@mathfonts
\begin{align*}
2\sqrt{\lambda - \lambda^2}\,\sigma(\mathcal{W}_\alpha)
:= \Big\{2\sqrt{\lambda - \lambda^2}\,\gamma: \, \gamma\in \sigma(\mathcal{W}_\alpha)\Big\}
\subseteq \,\sigma\big(M_{\lambda}(\mathcal{W}_\alpha)\big).
\end{align*}
\endgroup
\end{theorem}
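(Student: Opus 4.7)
The plan is to exploit the fact that $M_{\lambda}(\mathcal{W}_\alpha)$ is itself a unilateral weighted shift with positive weights. By (\ref{I.P.4002}), its weights are $\beta_n := \lambda\alpha_n + (1-\lambda)\alpha_{n+1}$, so by (\ref{P.4002.2}) applied to $M_{\lambda}(\mathcal{W}_\alpha)$, we have $\sigma\bigl(M_{\lambda}(\mathcal{W}_\alpha)\bigr) = \{\gamma\in\mathbb{C}:\, |\gamma|\le r(M_{\lambda}(\mathcal{W}_\alpha))\}$, and similarly $\sigma(\mathcal{W}_\alpha) = \{\gamma:|\gamma|\le r(\mathcal{W}_\alpha)\}$. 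Because both spectra are closed disks centered at the origin, the stated inclusion reduces to the single scalar inequality
\[
2\sqrt{\lambda-\lambda^2}\, r(\mathcal{W}_\alpha) \le r\bigl(M_{\lambda}(\mathcal{W}_\alpha)\bigr).
\]

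To prove this inequality, the first step is to apply the arithmetic--geometric mean inequality weight-by-weight:
\[
\beta_n = \lambda\alpha_n + (1-\lambda)\alpha_{n+1} \ \ge\ 2\sqrt{\lambda(1-\lambda)}\,\sqrt{\alpha_n\alpha_{n+1}} \ =\ 2\sqrt{\lambda-\lambda^2}\,\sqrt{\alpha_n\alpha_{n+1}}.
\]
By (\ref{P.4002.3}), the numbers $\sqrt{\alpha_n\alpha_{n+1}}$ are precisely the weights of the Aluthge transform $\widetilde{\mathcal{W}_\alpha}$. Since the spectral-radius formula (\ref{P.4002.1}) is monotone in the weights (termwise inequality between positive weights passes through products and limits), this yields
\[
r\bigl(M_{\lambda}(\mathcal{W}_\alpha)\bigr) \ \ge\ 2\sqrt{\lambda-\lambda^2}\, r\bigl(\widetilde{\mathcal{W}_\alpha}\bigr).
\]

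The final step is to identify $r(\widetilde{\mathcal{W}_\alpha})$ with $r(\mathcal{W}_\alpha)$. I would do this directly from (\ref{P.4002.1}) by telescoping: the $n$-fold product of consecutive Aluthge weights starting at index $k$ equals
\[
\bigl(\alpha_k\alpha_{k+1}^{2}\cdots\alpha_{k+n-1}^{2}\alpha_{k+n}\bigr)^{1/2} \ =\ (\alpha_k\alpha_{k+n})^{1/2}\,(\alpha_{k+1}\cdots\alpha_{k+n-1}).
\]
Taking the $1/n$-th power and the supremum over $k$, the boundedness of $\{\alpha_n\}$ forces $(\alpha_k\alpha_{k+n})^{1/(2n)}\to 1$, while the remaining factor agrees in the limit with the spectral-radius expression for $\mathcal{W}_\alpha$. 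Combining the two inequalities gives $r(M_{\lambda}(\mathcal{W}_\alpha))\ge 2\sqrt{\lambda-\lambda^2}\,r(\mathcal{W}_\alpha)$, and hence the desired inclusion of disks.

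The only mildly delicate point is the last step, the equality $r(\widetilde{\mathcal{W}_\alpha})=r(\mathcal{W}_\alpha)$; once one writes out the telescoping carefully and uses $\sup_n\alpha_n<\infty$, it is routine, but it is the place where one must resist the temptation to invoke $\sigma(\widetilde{T})=\sigma(T)$ as a black box. Everything else is just AM--GM plus the two explicit formulas (\ref{P.4002.1})--(\ref{P.4002.2}) for the spectrum of a weighted shift applied twice---once to $\mathcal{W}_\alpha$ and once to its $\lambda$-mean transform.
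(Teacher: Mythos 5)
Your reduction of the inclusion to the radius inequality $2\sqrt{\lambda-\lambda^2}\,r(\mathcal{W}_\alpha)\le r\big(M_{\lambda}(\mathcal{W}_\alpha)\big)$ via (\ref{P.4002.2}), and your AM--GM step giving $r\big(M_{\lambda}(\mathcal{W}_\alpha)\big)\ge 2\sqrt{\lambda-\lambda^2}\,r(\widetilde{\mathcal{W}_\alpha})$, are exactly the paper's argument. The divergence is your last step: the paper simply quotes $r(\widetilde{T})=r(T)$ from \cite{Y.1} (note it needs only spectral-radius equality, not the full $\sigma(\widetilde{T})=\sigma(T)$ you were trying to avoid), whereas you attempt a self-contained telescoping proof, and that step has a genuine gap. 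Put $P_{k,n}=\alpha_k\cdots\alpha_{k+n-1}$ and $A_{k,n}=\prod_{j=k}^{k+n-1}\sqrt{\alpha_j\alpha_{j+1}}=(\alpha_k\alpha_{k+n})^{1/2}P_{k+1,n-1}$. What you need is the direction $r(\widetilde{\mathcal{W}_\alpha})\ge r(\mathcal{W}_\alpha)$, i.e.\ a \emph{lower} bound on $\sup_k A_{k,n}$. Your justification, ``boundedness of $\{\alpha_n\}$ forces $(\alpha_k\alpha_{k+n})^{1/(2n)}\to 1$'', is false: boundedness above gives only $(\alpha_k\alpha_{k+n})^{1/(2n)}\le (\sup_j\alpha_j)^{1/n}\to 1$, which is the useless direction. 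The weights are positive and bounded but not assumed bounded away from $0$; for $\alpha_n=2^{-n}$ one has $(\alpha_k\alpha_{k+n})^{1/(2n)}\to 2^{-1/2}$, and in general the factor $(\alpha_k\alpha_{k+n})^{1/2}$ can be arbitrarily small at precisely the index $k$ maximizing $P_{k+1,n-1}$. Moreover, $\sup_k$ of a product factors only as an upper bound, never as the lower bound your sentence implicitly uses.

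The gap is repairable without invoking \cite{Y.1}, but by a different telescoping that compares windows of \emph{different} lengths: from the identity above, $P_{k,n+1}=\alpha_k P_{k+1,n-1}\alpha_{k+n}=\sqrt{\alpha_k\alpha_{k+n}}\,A_{k,n}\le\big(\sup_j\alpha_j\big)\,A_{k,n}$ for every $k$ and $n$. Taking suprema over $k$, then $(n+1)$-st roots, and letting $n\to\infty$ in (\ref{P.4002.1}) gives $r(\mathcal{W}_\alpha)\le r(\widetilde{\mathcal{W}_\alpha})$ using only positivity and boundedness above of the weights: the constant $\sup_j\alpha_j$ disappears in the root, and $\big(\sup_k A_{k,n}\big)^{1/(n+1)}=\big[\big(\sup_k A_{k,n}\big)^{1/n}\big]^{n/(n+1)}\to r(\widetilde{\mathcal{W}_\alpha})$. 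With this substitution, or by citing \cite{Y.1} as the paper does, your proof is complete and otherwise coincides with the paper's.
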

\begin{proof}
For $A\in\mathbb{B}(\mathcal{H})$, it is well known that the spectral radius of the Aluthge transform $A$
equals that of $A$ (see, for example, \cite{Y.1}).
Therefore, by (\ref{P.4002.1}) and (\ref{P.4002.3}), we have
\begingroup\makeatletter\def\f@size{10}\check@mathfonts
\begin{align*}
2\sqrt{\lambda - \lambda^2}\,r(\mathcal{W}_\alpha) &= 2\sqrt{\lambda - \lambda^2}\,r(\widetilde{\mathcal{W}_\alpha})
\\ & = 2\sqrt{\lambda - \lambda^2}\displaystyle{\lim_{n\rightarrow +\infty}}\Big(\displaystyle{\sup_{k\in \mathbb{N}}}
\big(\sqrt{\alpha_{k}\alpha_{k+1}}\ldots \sqrt{\alpha_{k+n-1}\alpha_{k+n}}\big)\Big)^{1/n}
\\ & = 2\displaystyle{\lim_{n\rightarrow +\infty}}\Big(\displaystyle{\sup_{k\in \mathbb{N}}}
\big(\sqrt{\lambda\alpha_{k}(1-\lambda)\alpha_{k+1}}\ldots
\sqrt{\lambda\alpha_{k+n-1}(1-\lambda)\alpha_{k+n}}\big)\Big)^{1/n}
\\& \leq \displaystyle{\lim_{n\rightarrow +\infty}}\Big(\displaystyle{\sup_{k\in \mathbb{N}}}
\Big(\big(\lambda\alpha_{k}+(1-\lambda)\alpha_{k+1}\big)\ldots
\big(\lambda\alpha_{k+n-1}+(1-\lambda)\alpha_{k+n}\big)\Big)\Big)^{1/n}
\\& \qquad \qquad \qquad \qquad \big(\mbox{by the arithmetic-geometric mean inequality}\big)
\\& = r\big(M_{\lambda}(\mathcal{W}_\alpha)\big),
\end{align*}
\endgroup
whence
\begin{align}\label{T.7002.1}
2\sqrt{\lambda - \lambda^2}\,r(\mathcal{W}_\alpha) \leq r\big(M_{\lambda}(\mathcal{W}_\alpha)\big).
\end{align}
Now from (\ref{P.4002.2}) and (\ref{T.7002.1}) it follows that
\begin{align*}
\Big\{2\sqrt{\lambda - \lambda^2}\,\gamma: \, \gamma\in \sigma(\mathcal{W}_\alpha)\Big\}
\subseteq \,\sigma\big(M_{\lambda}(\mathcal{W}_\alpha)\big).
\end{align*}
\end{proof}
\begin{remark}\label{R.8002}
Let $T = U|T|$ be the canonical polar decomposition of $T\in\mathbb{B}(\mathcal{H})$ and let $\lambda \in (0, 1)$.
If $T$ is invertible, then $U$ is unitary and so $\lambda|T|$ and $(1-\lambda)U^*|T|U$ are positive and invertible.
Therefore,
\begin{align*}
M_{\lambda}(T) = \lambda T + (1-\lambda)T^D = U\big(\lambda |T| + (1-\lambda)U^*|T|U)
\end{align*}
is also invertible. The converse is however not true, in general.
For example (see \cite[Example 2.4]{C.C.M}), let $\mathcal{W}_\beta$ be the weighted bilateral shift defined by
$\mathcal{W}_\beta(e_n) = \beta_{n}e_{n+1}$ for all $n\in \mathbb{Z}$, where
$\beta_n : =\left\{\begin{array}{ll}
1 &\mbox{if n even}\\
\frac{1}{n^2} &\mbox{if n odd}.\end{array}\right.$

Since $\mathcal{W}_\beta(e_{2n+1}) = \beta_{2n+1}e_{2n+2} = \frac{1}{(2n+1)^2}e_{2n+2}$, we get
$\displaystyle{\lim_{n\rightarrow +\infty}}\mathcal{W}_\beta(e_{2n+1}) = 0$ and hence $T$ is not invertible.
On the other hand, we have
\begin{align*}
\lambda\beta_n  + (1-\lambda)\beta_{n+1} =\left\{\begin{array}{ll}
\lambda + \frac{1-\lambda}{(n+1)^2} &\mbox{if n even}\\
1-\lambda + \frac{\lambda}{n^2} &\mbox{if n odd}.\end{array}\right.
\end{align*}
Thus $\max\{\lambda, 1-\lambda\} \leq \lambda\beta_n  + (1-\lambda)\beta_{n+1} \leq 1$
for all $n\in \mathbb{Z}$, and so $M_{\lambda}(\mathcal{W}_\beta)$ is invertible.
\end{remark}
In the last part of this section we consider the symmetric of the $\lambda$-mean
transform of some complex symmetric truncated weighted shifts.
Recall that a conjugation on a complex Hilbert space $\mathcal{H}$
is a conjugate-linear operator $C : \mathcal{H} \longrightarrow \mathcal{H}$,
which is both involutive ($C^2 = I$) and isometric. An operator $T\in\mathbb{B}(\mathcal{H})$
is said to be complex symmetric if there exists a conjugation $C$ on
$\mathcal{H}$ such that $CTC = T^*$. Many usual operators such as normal
operators, compressed Toeplitz operators, Hankel matrices, and the Volterra integration operators are included in the
class of complex symmetric operators. We refer the reader to \cite{G.Pu, G.Wo} for further details.
\begin{lemma}\cite[Proposition 3.2]{Z.L}\label{L.9002}
Let ${\{e_n\}}^m_{n=1}$ be an orthonormal basis of $\mathbb{C}^m$. If
$T= \sum_{n=1}^{m-1}\alpha_ne_n\otimes e_{n+1}$ and $\alpha_n\neq0$ for all $1\leq n \leq m-1$.
Then $T$ is complex symmetric if and only if $|\alpha_n| = |\alpha_{m-n}|$ for all $1\leq n \leq m-1$.
\end{lemma}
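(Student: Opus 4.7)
My plan is to exploit the fact that $T$ acts on the orthonormal basis by $Te_{n+1}=\alpha_ne_n$ for $1\le n\le m-1$ and $Te_1=0$, so that the iterates have a transparent kernel filtration
\begin{align*}
\mathcal{N}(T^k)=\mathrm{span}(e_1,\ldots,e_k),\qquad \mathcal{N}(T^{*k})=\mathrm{span}(e_{m-k+1},\ldots,e_m),
\end{align*}
for $1\le k\le m$. Together with the conjugate-linear isometry identity $\langle Cx,Cy\rangle=\langle y,x\rangle$ (which follows from $\|Cx\|=\|x\|$ by polarization for any conjugation $C$), this filtration will be the main lever.

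For the sufficiency direction, assuming $|\alpha_n|=|\alpha_{m-n}|$, I would look for a conjugation of the form $Ce_n:=\lambda_ne_{m+1-n}$ with $|\lambda_n|=1$. Isometry is automatic, while $C^2=I$ reduces to the symmetry condition $\lambda_n=\lambda_{m+1-n}$. A direct computation of $CTCe_k$, using the conjugate-linearity of $C$ to pull out the bar on the scalars, and comparison with $T^*e_k=\bar{\alpha}_ke_{k+1}$ reduces $CTC=T^*$ to the scalar recursion
\begin{align*}
\bar{\lambda}_k\lambda_{k+1}=\overline{\alpha_k/\alpha_{m-k}}\qquad(k=1,\ldots,m-1),
\end{align*}
whose right-hand side has modulus one by hypothesis. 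Writing $\alpha_k/\alpha_{m-k}=e^{i\theta_k}$, I would solve the recursion by $\lambda_1:=1$ and $\lambda_{k+1}:=\lambda_ke^{-i\theta_k}$.

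The step I expect to be the main obstacle is verifying that this recursion is compatible with the involution constraint $\lambda_n=\lambda_{m+1-n}$. Written out in phases, this reduces to the telescoping identity $\sum_{k=n}^{m-n}\theta_k=0$. The evident antisymmetry $\theta_{m-k}+\theta_k=0$, together with the reindexing $j:=m-k$, shows the sum equals its own negative and hence vanishes; when $m$ is even the pair-up simply leaves the middle term $\theta_{m/2}=0$. This parity check is the only genuinely nontrivial ingredient of the \emph{if} direction.

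For necessity, suppose $C$ is any conjugation with $CTC=T^*$. Raising to powers gives $CT^kC=T^{*k}$, so $C$ sends $\mathcal{N}(T^k)$ onto $\mathcal{N}(T^{*k})$. A short induction on $k$ then pins $C$ down to the normal form $Ce_n=\lambda_ne_{m+1-n}$ with $|\lambda_n|=1$: the family $\{Ce_j\}$ is orthonormal by the conjugate-isometry relation, each $Ce_k$ lies in $\mathrm{span}(e_{m-k+1},\ldots,e_m)$, and orthogonality to $Ce_1,\ldots,Ce_{k-1}$ leaves only the one-dimensional line through $e_{m-k+1}$. With $C$ in this form, reading $CTC=T^*$ on basis vectors gives $\bar{\lambda}_k\lambda_{m-k}\bar{\alpha}_{m-k}=\bar{\alpha}_k$, and taking moduli yields $|\alpha_k|=|\alpha_{m-k}|$ at once.
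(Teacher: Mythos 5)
Your proof is correct, but there is nothing in the paper to compare it against: the paper does not prove this lemma at all, it imports it verbatim from \cite[Proposition 3.2]{Z.L}, so your argument is a genuine self-contained addition rather than a variant of an internal proof. Both halves check out. Since all $\alpha_n\neq0$, the kernel filtration $\mathcal{N}(T^k)=\mathrm{span}(e_1,\ldots,e_k)$, $\mathcal{N}(T^{*k})=\mathrm{span}(e_{m-k+1},\ldots,e_m)$ is valid; combined with $CT^kC=T^{*k}$ (which uses $C^2=I$) and orthonormality of $\{Ce_j\}$, it does force any conjugation satisfying $CTC=T^*$ into the flip form $Ce_n=\lambda_n e_{m+1-n}$ with $|\lambda_n|=1$, and then $CTCe_k=\bar{\lambda}_k\lambda_{m-k}\bar{\alpha}_{m-k}e_{k+1}=\bar{\alpha}_ke_{k+1}$ gives necessity upon taking moduli. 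For sufficiency your construction works, but two phrasings should be tightened in a final write-up. First, what $CTC=T^*$ reduces to directly is $\bar{\lambda}_k\lambda_{m-k}=\overline{\alpha_k/\alpha_{m-k}}$; your recursion $\bar{\lambda}_k\lambda_{k+1}=\overline{\alpha_k/\alpha_{m-k}}$ already has the involution constraint $\lambda_{m-k}=\lambda_{k+1}$ substituted in, so the logic must be stated as: define $\lambda_k$ by the recursion, verify $\lambda_n=\lambda_{m+1-n}$, and only then conclude the original identity holds --- which is what you in fact do, but it deserves to be made explicit. Second, the compatibility check is cleaner multiplicatively: setting $\mu_k:=\alpha_k/\alpha_{m-k}$, one has $\mu_k\mu_{m-k}=1$ and $\mu_{m/2}=1$ when $m$ is even, so $\prod_{k=n}^{m-n}\mu_k=1$ by pairing $k\leftrightarrow m-k$; phrased additively with angles defined only mod $2\pi$, the statement that the sum equals its own negative yields only $2S\equiv0\pmod{2\pi}$, so either work with the products or fix representatives with $\theta_{m-k}=-\theta_k$ exactly. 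Your observation that the middle term is genuinely trivial (since $\alpha_{m/2}/\alpha_{m/2}=1$) is exactly what rescues the even case. With those two points polished, you have a complete elementary proof of a fact the paper only cites.
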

\begin{theorem}\label{T.10.002}
Let ${\{e_n\}}^m_{n=1}$ be an orthonormal basis of $\mathbb{C}^m$ and $\lambda\in(0, 1)$.
If $T= \sum_{n=1}^{m-1}\alpha_ne_n\otimes e_{n+1}$ where $m\geq3$ and $\alpha_n$ are complex numbers
such that $\alpha_1 \neq 0$ and $\lambda|\alpha_{n+1}| + (1-\lambda)|\alpha_n| \neq 0$
for all $1\leq n \leq m-2$, then the following statements are equivalent:
\begin{itemize}
\item[(i)] $M_{\lambda}(T)$ is complex symmetric.
\item[(ii)]
\begingroup\makeatletter\def\f@size{10}\check@mathfonts
$\lambda\big(|\alpha_1| - |\alpha_{m-1}|\big) = (1-\lambda)|\alpha_{m-2}|$
and
$\lambda\big(|\alpha_n| - |\alpha_{m-n}|\big) = (1-\lambda)\big(|\alpha_{m-n-1}| - |\alpha_{n-1}|\big)$
for all $2\leq n \leq m-2$.
\endgroup
\end{itemize}
\end{theorem}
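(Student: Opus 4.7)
The plan is to show that $M_\lambda(T)$ is itself a truncated weighted shift of the same form as $T$, with weights whose moduli are convex combinations of the moduli of the original weights, and then reduce the complex-symmetry question to Lemma \ref{L.9002}.

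First I would write down the canonical polar decomposition $T = U|T|$ explicitly: $|T|$ is the diagonal operator with $|T|e_1 = 0$ and $|T|e_{k+1} = |\alpha_k|e_{k+1}$, while $U$ acts as a back-shift $Ue_{k+1} = (\alpha_k/|\alpha_k|)e_k$ on the relevant basis vectors (and $Ue_1 = 0$). Feeding this into $T^D = |T|U$ produces $T^D e_2 = 0$ and $T^D e_{k+1} = (\alpha_k/|\alpha_k|)|\alpha_{k-1}|\,e_k$ for $k\ge 2$, so that $T^D$ has exactly the same shape as $T$ with new weights $\beta_n$ satisfying $\beta_1 = 0$ and $\beta_n = (\alpha_n/|\alpha_n|)|\alpha_{n-1}|$ for $n \ge 2$. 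Consequently $M_\lambda(T) = \sum_{n=1}^{m-1}\gamma_n\, e_n \otimes e_{n+1}$ with $\gamma_n := \lambda\alpha_n + (1-\lambda)\beta_n$, and (under the convention $|\alpha_0| := 0$) one reads off
\begin{align*}
|\gamma_n| \;=\; \lambda|\alpha_n| + (1-\lambda)|\alpha_{n-1}|, \qquad 1 \le n \le m-1.
\end{align*}
The hypotheses $\alpha_1 \neq 0$ and $\lambda|\alpha_{n+1}|+(1-\lambda)|\alpha_n|\neq 0$ for $1\le n\le m-2$ are precisely what guarantees $\gamma_n \neq 0$ for every $n$, which is the nondegeneracy hypothesis of Lemma \ref{L.9002}.

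With this reduction in hand, Lemma \ref{L.9002} applied to $M_\lambda(T)$ says that $M_\lambda(T)$ is complex symmetric if and only if $|\gamma_n| = |\gamma_{m-n}|$ for all $1\le n\le m-1$. Substituting the formula for $|\gamma_n|$ and separating the case $n=1$ (where $|\gamma_1| = \lambda|\alpha_1|$ because $\beta_1 = 0$) from the generic case $2\le n\le m-2$, the two equalities rearrange, respectively, to $\lambda(|\alpha_1|-|\alpha_{m-1}|) = (1-\lambda)|\alpha_{m-2}|$ and $\lambda(|\alpha_n|-|\alpha_{m-n}|) = (1-\lambda)(|\alpha_{m-n-1}|-|\alpha_{n-1}|)$, which are exactly the two conditions in (ii). The case $n=m-1$ reduces to the first equation, and the range $2\le n\le m-2$ is stable under $n\mapsto m-n$, so no additional equations are produced.

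The only genuine subtlety is the boundary behavior at $e_1$: because $|T|e_1 = 0$, the natural formula $\beta_n = (\alpha_n/|\alpha_n|)|\alpha_{n-1}|$ degenerates at $n=1$ into $\beta_1 = 0$, and this asymmetry is exactly what forces the first equation in (ii) to look different from the generic one. Once this boundary effect is absorbed into the convention $|\alpha_0| := 0$, the remainder of the argument is purely algebraic bookkeeping.
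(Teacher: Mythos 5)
Your proposal is correct and is essentially the paper's own proof: both exhibit the polar decomposition $T = U|T|$ with $U$ the unimodular truncated shift, compute $T^D = |T|U$ to conclude that $M_{\lambda}(T)$ is again a truncated weighted shift whose weight moduli are $\lambda|\alpha_1|$ and $\lambda|\alpha_n| + (1-\lambda)|\alpha_{n-1}|$ for $2 \le n \le m-1$, and then invoke Lemma \ref{L.9002}, with the $n=1$ (equivalently $n=m-1$) equation yielding the first condition in (ii) and the indices $2 \le n \le m-2$ yielding the second. The one caveat --- your formula $Ue_{k+1} = (\alpha_k/|\alpha_k|)e_k$ silently requires every $\alpha_k \neq 0$, whereas the stated hypotheses only exclude two consecutive vanishing weights, so $U$ need not be the canonical partial isometry (and the weight formula for $T^D$ can fail) if some interior $\alpha_k = 0$ --- is inherited verbatim from the paper's proof, so it is not a point on which your argument diverges from the paper's.
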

\begin{proof}
For $1\leq n \leq m-1$, let us put $\alpha _n = e^{i\theta_n}|\alpha_n|$ for some real number $\theta_n$.
Suppose that
\begin{align}\label{T.10.002.1}
T = \begin{bmatrix}
0 & \alpha_1 & 0 & \cdot & \cdots & 0\\
0 & 0 & \alpha_2 & 0 & \cdots & 0\\
\vdots & \vdots & \ddots & \ddots & \cdots & \vdots\\
\cdot & \cdot & \cdots & 0 & \cdot & 0\\
\cdot & \cdot & \cdots & \cdot & 0 & \alpha_{m-1}\\
0 & 0 & \cdots & \cdot & \cdot & 0
\end{bmatrix}
\end{align}
and
\begin{align*}
U = \begin{bmatrix}
0 & e^{i\theta_1} & 0 & \cdot & \cdots & 0\\
0 & 0 & e^{i\theta_2} & 0 & \cdots & 0\\
\vdots & \vdots & \ddots & \ddots & \cdots & \vdots\\
\cdot & \cdot & \cdots & 0 & \cdot & 0\\
\cdot & \cdot & \cdots & \cdot & 0 & e^{i\theta_{m-1}}\\
0 & 0 & \cdots & \cdot & \cdot & 0
\end{bmatrix}.
\end{align*}
It is easy to see that $|T|=\mbox{diag}(0, |\alpha_0|, |\alpha_1|, \ldots, |\alpha_{m-1}|)$
and $T = U|T|$ is the canonical polar decomposition of $T$.
Therefore,
\begin{align}\label{T.10.002.2}
T^D = |T|U =  = \begin{bmatrix}
0 & 0 & 0 & \cdot & \cdots & 0\\
0 & 0 & e^{i\theta_2}|\alpha_1| & 0 & \cdots & 0\\
\vdots & \vdots & \ddots & \ddots & \cdots & \vdots\\
\cdot & \cdot & \cdots & 0 & \cdot & 0\\
\cdot & \cdot & \cdots & \cdot & 0 & e^{i\theta_{m-1}}|\alpha_{m-2}|\\
0 & 0 & \cdots & \cdot & \cdot & 0
\end{bmatrix}.
\end{align}
Since $M_{\lambda}(T) = \lambda T + (1-\lambda)T^D$, by (\ref{T.10.002.1}) and (\ref{T.10.002.2}), we obtain
\begingroup\makeatletter\def\f@size{8}\check@mathfonts
\begin{align*}
M_{\lambda}(T) = \begin{bmatrix}
0 & \lambda \alpha_1 & 0 & \cdot & \cdots & 0\\
0 & 0 & e^{i\theta_2}\big(\lambda|\alpha_2| + (1-\lambda)|\alpha_1|\big) & 0 & \cdots & 0\\
\vdots & \vdots & \ddots & \ddots & \cdots & \vdots\\
\cdot & \cdot & \cdots & 0 & \cdot & 0\\
\cdot & \cdot & \cdots & \cdot & 0 & e^{i\theta_{m-1}}\big(\lambda|\alpha_{m-1}| + (1-\lambda)|\alpha_{m-2}|\big)\\
0 & 0 & \cdots & \cdot & \cdot & 0
\end{bmatrix}.
\end{align*}
\endgroup
Now Lemma \ref{L.9002} implies that $M_{\lambda}(T)$ is complex symmetric if and only if
$\lambda\big(|\alpha_1| - |\alpha_{m-1}|\big) = (1-\lambda)|\alpha_{m-2}|$ and
$\lambda\big(|\alpha_n| - |\alpha_{m-n}|\big) = (1-\lambda)\big(|\alpha_{m-n-1}| - |\alpha_{n-1}|\big)$ for all $2\leq n \leq m-2$.
\end{proof}
\begin{remark}
In general, the $\lambda$-mean transform $M_{\lambda}(T)$ of a complex symmetric operator $T$
may not be complex symmetric.
For example, let $\lambda\in(0, 1)$ and let
\begin{align*}
T = \begin{bmatrix}
0 & 1 & 0 & 0 & 0\\
0 & 0 & 2 & 0 & 0\\
0 & 0 & 0 & 2 & 0\\
0 & 0 & 0 & 0 & 1\\
0 & 0 & 0 & 0 & 0
\end{bmatrix}.
\end{align*}
Simple computations show that
\begin{align*}
M_{\lambda}(T) = \begin{bmatrix}
0 & \lambda & 0 & 0 & 0\\
0 & 0 & \lambda+1 & 0 & 0\\
0 & 0 & 0 & 2 & 0\\
0 & 0 & 0 & 0 & 1-\lambda\\
0 & 0 & 0 & 0 & 0
\end{bmatrix}.
\end{align*}
So, by Lemma \ref{L.9002}, $T$ is complex symmetric and
$M_{\lambda}(T)$ is not complex symmetric.

On the other hand, if
$S = \begin{bmatrix}
0 & 1/{\lambda} & 0 & 0\\
0 & 0 & 1 & 0\\
0 & 0 & 0 & 1\\
0 & 0 & 0 & 0
\end{bmatrix}$ with $\lambda\in(0, 1)$,
then a direct calculation shows that
\begin{align*}
M_{\lambda}(S) = \begin{bmatrix}
0 & 1 & 0 & 0\\
0 & 0 & \lambda + 1/{\lambda}-1 & 0\\
0 & 0 & 0 & 1\\
0 & 0 & 0 & 0
\end{bmatrix}.
\end{align*}
Again by Lemma \ref{L.9002}, $M_{\lambda}(S)$
is complex symmetric and $S$ is not complex symmetric.
\end{remark}
\section{Bounds for the operator norm of the $\lambda$-mean transform}
In this section, we derive some norm inequalities and equalities for
the $\lambda$-mean transform of Hilbert space operators.

First, we state a norm inequality for the Duggal transform that will be needed.
\begin{proposition}\label{P.1003}
Let $T\in\mathbb{B}(\mathcal{H})$ and $\gamma \in \mathbb{C}$. Then
\begin{align*}
\big\|T^D - \gamma I\big\| \leq \|T - \gamma I\|.
\end{align*}
In particular, $\|T^D\| \leq \|T\|$.
\end{proposition}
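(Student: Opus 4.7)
The plan is to compare $T^D - \gamma I$ to the conjugate $U^*(T - \gamma I)U$. Let $T = U|T|$ be the canonical polar decomposition and set $P = U^*U$, the orthogonal projection onto $\mathcal{N}(U)^\perp = \overline{\mathcal{R}(|T|)}$. Since $\mathcal{R}(|T|)$ is contained in its own closure $\mathcal{R}(P)$, we have $P|T| = |T|$, and a direct calculation gives
\begin{equation*}
U^*(T - \gamma I)U = U^*U|T|U - \gamma U^*U = |T|U - \gamma P = T^D - \gamma P.
\end{equation*}
Because $U$ is a partial isometry, $\|U\| = \|U^*\| \leq 1$, and this yields the preliminary bound $\|T^D - \gamma P\| \leq \|T - \gamma I\|$.

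The remaining task is to pass from a bound on $T^D - \gamma P$ to one on $T^D - \gamma I$. I would decompose $\mathcal{H}$ orthogonally as $\mathcal{N}(T)^\perp \oplus \mathcal{N}(T)$. For $x \in \mathcal{N}(T) = \mathcal{N}(U)$ one has $T^D x = |T|Ux = 0$, and for general $x$ the vector $T^D x = |T|Ux$ lies in $\mathcal{R}(|T|) \subseteq \mathcal{N}(|T|)^\perp = \mathcal{N}(T)^\perp$. Therefore $T^D$ is block diagonal with zero lower block, and so
\begin{equation*}
T^D - \gamma I = \begin{pmatrix} A & 0 \\ 0 & -\gamma I_{\mathcal{N}(T)} \end{pmatrix}, \qquad T^D - \gamma P = \begin{pmatrix} A & 0 \\ 0 & 0 \end{pmatrix},
\end{equation*}
for the same operator $A$ on $\mathcal{N}(T)^\perp$. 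Consequently,
\begin{equation*}
\|T^D - \gamma I\| = \max\bigl\{\|T^D - \gamma P\|,\, |\gamma|\bigr\},
\end{equation*}
where the $|\gamma|$ contribution is vacuous when $\mathcal{N}(T) = \{0\}$.

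To conclude, when $\mathcal{N}(T) \neq \{0\}$, any unit vector $x \in \mathcal{N}(T)$ satisfies $(T - \gamma I)x = -\gamma x$, so $|\gamma| \leq \|T - \gamma I\|$. Combined with the preliminary bound, this gives $\|T^D - \gamma I\| \leq \|T - \gamma I\|$, and the special case $\gamma = 0$ recovers $\|T^D\| \leq \|T\|$. I expect the only subtle step to be the verification of the block-diagonal form, which ultimately rests on the two basic identities $\mathcal{N}(T) = \mathcal{N}(|T|)$ and $\mathcal{R}(|T|) \subseteq \mathcal{N}(|T|)^\perp$ available for the self-adjoint operator $|T|$.
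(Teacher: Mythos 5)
Your proof is correct, and it takes a genuinely different route from the paper's. The paper proceeds in two stages: when $|T|$ is invertible it writes $T^D = |T|T|T|^{-1}$ and estimates the norm of this similarity, and for general $T$ it perturbs the modulus, setting $T_n = |T| + \frac{1}{n}I$ and $X_n = UT_n$, and passes to the limit. You instead conjugate by the partial isometry itself, obtaining the exact identity $U^*(T-\gamma I)U = T^D - \gamma P$ with $P = U^*U$, and then remove the discrepancy between $P$ and $I$ via the orthogonal decomposition $\mathcal{H} = \mathcal{N}(T)^\perp \oplus \mathcal{N}(T)$ together with the observation that $|\gamma| \leq \|T - \gamma I\|$ whenever $\mathcal{N}(T) \neq \{0\}$. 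All of your steps check out ($U^*U|T| = |T|$ since $U^*U$ is the projection onto $\overline{\mathcal{R}(|T|)}$; the block-diagonal forms; and $\|A \oplus B\| = \max\{\|A\|, \|B\|\}$ for an orthogonal direct sum), so the argument is complete. Your route also buys something substantive: it is direct, approximation-free, and it sidesteps two delicate points in the paper's own proof. First, the paper's displayed chain $\big\||T|(T-\gamma I)|T|^{-1}\big\| \leq \big\||T|\big\|\,\|T-\gamma I\|\,\big\||T|^{-1}\big\| \leq \|T-\gamma I\|$ is not valid as written, because $\big\||T|\big\|\,\big\||T|^{-1}\big\|$ is the condition number of $|T|$ and is $\geq 1$ in general; the legitimate repair in the invertible case is exactly your identity, since there $U$ is an isometry and $T^D - \gamma I = U^*(T-\gamma I)U$. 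Second, the paper's approximation step uses $X_n^D = T_nU$, i.e.\ that $UT_n$ is the canonical polar decomposition of $X_n$; this requires $U^*U = I$, hence $T$ injective, since otherwise $X_n^*X_n = T_nU^*UT_n \neq T_n^2$. Your argument needs neither hypothesis and treats all cases uniformly.
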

\begin{proof}
Let $T=U|T|$ be the canonical polar decomposition of $T$.
Suppose first that $|T|$ is invertible. Since $T^D |T| = |T|T$, we get $T^D = |T|T{|T|}^{-1}$. Therefore,
\begin{align*}
\big\|T^D - \gamma I\big\| = \big\||T|(T-\gamma I){|T|}^{-1}\big\|
\leq \big\||T|\big\|\big\|T-\gamma I\big\|\big\|{|T|}^{-1}\big\|
\leq \|T - \alpha I\|,
\end{align*}
and so $\big\|T^D - \gamma I\big\| \leq \|T - \gamma I\|$.
If $|T|$ is not invertible, then for each $n \in \mathbb{N}$ we put
\begin{align*}
T_n = |T| + \frac{1}{n}I \qquad \mbox{and} \qquad X_n = UT_n.
\end{align*}
Since $T_n$ is positive invertible, by what was proved above, we have
\begin{align}\label{P.1003.1}
\big\|X^D_n - \gamma I\big\| \leq \|X_n - \gamma I\|.
\end{align}
We have $\|X^D_n - T^D\| = \|(T_n - |T|)U\| \leq \|T_n - |T|\|$ and hence
\begin{align}\label{P.1003.2}
\|X^D_n - T^D\| \leq \frac{1}{n}.
\end{align}
Further, $\|X_n - T\| = \|U(T_n - |T|)\| \leq \|T_n - |T|\|$ and so
\begin{align}\label{P.1003.3}
\|X_n - T\| \leq \frac{1}{n}.
\end{align}
Now, by (\ref{P.1003.1}), (\ref{P.1003.2}) and (\ref{P.1003.3}), we have
\begin{align*}
\big\|T^D - \gamma I\big\| &\leq \|X^D_n - T^D\| + \big\|X^D_n - \gamma I\big\|
\leq \frac{1}{n} + \big\|X_n - \gamma I\big\|
\\& \leq \frac{1}{n} + \|X_n - T\| + \big\|T - \gamma I\big\|
\leq \frac{2}{n} + \big\|T - \gamma I\big\|.
\end{align*}
Taking limits, we get $\big\|T^D - \gamma I\big\| \leq \|T - \gamma I\|$.
\end{proof}
Here we present one of the main results of this section.
\begin{theorem}\label{T.2003}
Let $T\in\mathbb{B}(\mathcal{H})$ and let $\lambda \in [0, 1]$.
Then
\begin{align}\label{T.2003.1}
2\sqrt{\lambda - \lambda^2}\,\|\widetilde{T}\|\leq \big\|M_{\lambda}(T)\big\| \leq \lambda\|T\| + (1-\lambda)\|T^D\big\|.
\end{align}
In particular,
\begin{align}\label{T.2003.2}
2\sqrt{\lambda - \lambda^2}\,r(T)\leq \big\|M_{\lambda}(T)\big\| \leq\|T\|.
\end{align}
\end{theorem}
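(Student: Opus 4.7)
The plan is to prove each direction of \eqref{T.2003.1} separately and then derive \eqref{T.2003.2} as a direct consequence.

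The upper bound in \eqref{T.2003.1} is immediate from the triangle inequality applied to $M_{\lambda}(T) = \lambda T + (1-\lambda)T^D$. Combining this with Proposition \ref{P.1003}, which asserts $\|T^D\| \leq \|T\|$, one obtains the upper bound $\|M_{\lambda}(T)\| \leq \|T\|$ in \eqref{T.2003.2}.

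For the lower bound in \eqref{T.2003.1}, my approach is to recast $M_{\lambda}(T)$ into the form $AX + XB$ with $A, B \geq 0$ so that the Bhatia--Davis arithmetic--geometric mean inequality for the operator norm,
\[
\|A^{1/2}XB^{1/2}\| \leq \tfrac{1}{2}\|AX + XB\|,
\]
becomes applicable. Writing the canonical polar decomposition $T = U|T|$ and choosing $A = (1-\lambda)|T|$, $B = \lambda|T|$, $X = U$, a short calculation gives
\[
AX + XB = (1-\lambda)|T|U + \lambda U|T| = (1-\lambda)T^D + \lambda T = M_{\lambda}(T),
\]
together with
\[
A^{1/2}XB^{1/2} = \sqrt{\lambda(1-\lambda)}\,|T|^{1/2}U|T|^{1/2} = \sqrt{\lambda - \lambda^2}\,\widetilde T.
\]
Substituting into the Bhatia--Davis inequality and multiplying by $2$ yields $2\sqrt{\lambda - \lambda^2}\,\|\widetilde T\| \leq \|M_{\lambda}(T)\|$, as required.

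To pass from \eqref{T.2003.1} to the lower bound in \eqref{T.2003.2}, I would combine the inequality just proved with two well-known facts used elsewhere in the paper: the identity $r(\widetilde T) = r(T)$ invoked in the proof of Theorem \ref{T.7002}, and the spectral-radius estimate $r(\widetilde T) \leq \|\widetilde T\|$ contained in \eqref{1.1}. Chaining these gives $2\sqrt{\lambda - \lambda^2}\,r(T) \leq 2\sqrt{\lambda - \lambda^2}\,\|\widetilde T\| \leq \|M_{\lambda}(T)\|$.

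The only nonroutine ingredient is the Bhatia--Davis arithmetic--geometric mean inequality for unitarily invariant norms; once one recognizes that $M_{\lambda}(T)$ admits the form $AX + XB$ with $A$ and $B$ scaled copies of $|T|$ and with $X = U$ the partial isometry from the polar decomposition, the rest is a direct computation. The factorization $(1-\lambda, \lambda)$ of the positive weights is forced by the requirement that the cross term $A^{1/2}XB^{1/2}$ reproduce the Aluthge transform $\widetilde T$ with the correct scalar factor $\sqrt{\lambda(1-\lambda)}$.
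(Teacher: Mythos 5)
Your proposal is correct and is essentially the paper's own proof: the paper also gets the upper bound from the triangle inequality (plus Proposition \ref{P.1003} for \eqref{T.2003.2}) and the lower bound by applying the very same inequality $\|A^{1/2}XB^{1/2}\| \leq \frac{1}{2}\|AX+XB\|$ (which it cites as the Heinz inequality \cite{Heinz}, the operator-norm case of Bhatia--Davis) with the identical choices $A=(1-\lambda)|T|$, $B=\lambda|T|$, $X=U$, followed by $r(T)=r(\widetilde{T})\leq\|\widetilde{T}\|$ via \eqref{1.1}. The only difference is the name you attach to the key inequality.
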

\begin{proof}
Let $T=U|T|$ be the canonical polar decomposition of $T$.
Since $M_{\lambda}(T)=\lambda T + (1-\lambda)T^D$,
by the triangle inequality, we obtain
\begin{align}\label{T.2003.3}
\big\|M_{\lambda}(T)\big\| \leq \lambda\|T\| + (1-\lambda)\|T^D\big\|.
\end{align}
The Heinz inequality (see \cite{Heinz}) is the following:
for positive operators $A, B\in\mathbb{B}(\mathcal{H})$ and any $X\in\mathbb{B}(\mathcal{H})$,
it holds
\begin{align*}
\big\|A^{1/2}XB^{1/2}\big\|\leq \left\|\frac{AX + XB}{2}\right\|.
\end{align*}
Applying the Heinz inequality with $A= (1-\lambda)|T|$, $B = \lambda|T|$ and $X = U$, we get
\begin{align*}
\Big\|\sqrt{1-\lambda}|T|^{1/2}U\sqrt{\lambda}|T|^{1/2}\Big\|\leq \left\|\frac{(1-\lambda)|T|U + U\lambda|T|}{2}\right\|.
\end{align*}
Therefore,
\begin{align*}
\sqrt{\lambda - \lambda^2}\big\|\,|T|^{1/2}U|T|^{1/2}\big\|\leq \frac{1}{2}\big\|\lambda T + (1-\lambda)T^D\big\|,
\end{align*}
whence
\begin{align}\label{T.2003.4}
2\sqrt{\lambda - \lambda^2}\|\widetilde{T}\|\leq \big\|M_{\lambda}(T)\big\|.
\end{align}
Utilizing (\ref{T.2003.3}) and (\ref{T.2003.4}), we deduce the inequalities (\ref{T.2003.1}).

Further, since $r(T) = r(\widetilde{T})$, by (\ref{1.1}) we get
\begin{align}\label{T.2003.5}
r(T)\leq \|\widetilde{T}\|.
\end{align}
The inequalities (\ref{T.2003.2}) now follow from Proposition \ref{P.1003}
and the inequalities in (\ref{T.2003.1}) and (\ref{T.2003.5}).
\end{proof}
If $\|T\| = 0$, then by (\ref{T.2003.2}) we have $\big\|M_{\lambda}(T)\big\| = 0$.
In the following proposition we show that the converse also holds true for $\lambda \in (0, 1]$.
\begin{proposition}\label{P.2003}
Let $T\in\mathbb{B}(\mathcal{H})$ and let $\lambda \in (0, 1]$.
Then the following conditions are equivalent:
\begin{itemize}
\item[(i)] $\|T\| = 0$.
\item[(ii)] $\big\|M_{\lambda}(T)\big\| = 0$.
\end{itemize}
\end{proposition}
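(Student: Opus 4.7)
The implication $(i) \Rightarrow (ii)$ is immediate since $\|T\| = 0$ forces $T = 0$, whence $T^D = 0$ and hence $M_{\lambda}(T) = 0$. So the content is in $(ii) \Rightarrow (i)$.

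My plan is to argue directly from the defining equation $M_{\lambda}(T) = 0$ using the polar decomposition $T = U|T|$. The case $\lambda = 1$ is trivial because $M_{1}(T) = T$, so assume $\lambda \in (0,1)$. The hypothesis $\|M_{\lambda}(T)\| = 0$ reads
\begin{align*}
\lambda\, U|T| + (1-\lambda)\,|T|U = 0.
\end{align*}
The key step is to multiply this identity on the left by $U^{*}$. Since $U$ is the partial isometry from the canonical polar decomposition, $U^{*}U$ is the orthogonal projection onto $\overline{\mathcal{R}(|T|)}$, and therefore $U^{*}U|T| = |T|$. This turns the equation into
\begin{align*}
\lambda\,|T| + (1-\lambda)\,U^{*}|T|U = 0,
\end{align*}
which rearranges to $U^{*}|T|U = -\frac{\lambda}{1-\lambda}\,|T|$.

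Now I observe the sign contradiction that finishes the argument: the left-hand side $U^{*}|T|U = (|T|^{1/2}U)^{*}(|T|^{1/2}U)$ is a positive operator, while the right-hand side, being a strictly negative scalar multiple of the positive operator $|T|$, is negative. The only positive operator that is also negative is zero, so both sides vanish; in particular $|T| = 0$, hence $T = U|T| = 0$, which gives $\|T\| = 0$.

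I do not anticipate a serious obstacle here. The only subtle point worth being explicit about is the identity $U^{*}U|T| = |T|$, which relies on the convention $\mathcal{N}(U) = \mathcal{N}(T) = \mathcal{N}(|T|)$ from the canonical polar decomposition. One could alternatively try to deduce the conclusion from the lower bound $2\sqrt{\lambda-\lambda^{2}}\,\|\widetilde{T}\| \leq \|M_{\lambda}(T)\|$ in Theorem~\ref{T.2003}, but that only yields $\widetilde{T} = 0$, which is weaker than $T = 0$, so one would still need an extra argument; the direct route above is cleaner.
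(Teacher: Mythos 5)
Your proof is correct and follows essentially the same route as the paper: both arguments left-multiply the vanishing equation $\lambda U|T| + (1-\lambda)|T|U = 0$ by $U^{*}$, invoke the identity $U^{*}U|T| = |T|$, and use positivity of $U^{*}|T|U$ to force $|T| = 0$. The only difference is presentational --- the paper runs the argument through suprema of quadratic forms $\langle \cdot\, x, x\rangle$ (which also absorbs the case $\lambda = 1$ without splitting it off), whereas you phrase it as an operator identity and a sign contradiction.
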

\begin{proof}
It is obvious that (i) implies (ii). We will prove that (ii) implies (i).
Let $\big\|M_{\lambda}(T)\big\| = 0$ and let $T=U|T|$ be the canonical polar decomposition of $T$.
Then $\big(\lambda U|T|+(1-\lambda)|T|U\big)x= 0$ for all $x\in\mathcal{H}$. We have
\begin{align*}
\big\||T|\big\| &= \frac{1}{\lambda}\displaystyle{\sup_{\|x\|=1}}\big\langle \lambda|T|x, x\big\rangle
\\& \leq \frac{1}{\lambda}\displaystyle{\sup_{\|x\|=1}}\Big(\big\langle \lambda|T|x, x\big\rangle + \big\langle(1-\lambda)U^*|T|Ux, x\big\rangle\Big)
\\&\qquad \qquad \qquad \qquad \qquad \big(\mbox{since $(1-\lambda)U^*|T|U$ is a positive operator}\big)
\\& = \frac{1}{\lambda}\displaystyle{\sup_{\|x\|=1}}\Big\langle \big(\lambda|T|+(1-\lambda)U^*|T|U\big)x, x\Big\rangle
\\& = \frac{1}{\lambda}\displaystyle{\sup_{\|x\|=1}}\Big\langle U^*\big(\lambda U|T|+(1-\lambda)|T|U\big)x, x\Big\rangle
\\&\qquad \qquad \qquad \qquad \qquad \qquad \qquad \qquad \qquad \qquad \big(\mbox{since $U^*U|T| = |T|$}\big)
\\& = \frac{1}{\lambda}\displaystyle{\sup_{\|x\|=1}}\Big\langle \big(\lambda U|T|+(1-\lambda)|T|U\big)x, Ux\Big\rangle = 0,
\end{align*}
and so $\big\||T|\big\| \leq 0$. Consequently, $\|T\|=\big\||T|\big\| = 0$.
\end{proof}
\begin{remark}\label{R.3003}
Let $T$ be a nonzero bounded operator on $\mathcal{H}$ such that $\widetilde{T}=0$.
Consider the canonical polar decomposition $T = U|T|$ of $T$. We have
\begin{align*}
T^2 = U|T|U|T| = U|T|^{1/2}\widetilde{T}|T|^{1/2} = 0,
\end{align*}
and hence
\begin{align*}
\mathcal{R}(U) = \overline{\mathcal{R}(T)} \subseteq \mathcal{N}(T) = \mathcal{N}(|T|).
\end{align*}
This implies $T^D = |T|U = 0$. Thus $\big\|M_{0}(T)\big\| = 0$, and while what we have $\|T\| > 0$.
\end{remark}
In the following theorem we obtain an improvement of the second inequality in (\ref{T.2003.1}).
\begin{theorem}\label{T.4003}
Let $T\in\mathbb{B}(\mathcal{H})$ and let $\lambda \in [0, 1]$. Then
\begingroup\makeatletter\def\f@size{8}\check@mathfonts
\begin{align}\label{T.4003.1}
\big\|M_{\lambda}(T)\big\| \leq \frac{\big\|\lambda|T| + (1 - \lambda)|T^D|\big\|
+ \big\|\lambda|T^*| + (1 - \lambda)|(T^D)^*|\big\|}{2}
\leq \lambda\|T\| + (1 - \lambda)\|T^D\|.
\end{align}
\endgroup
In particular,
\begingroup\makeatletter\def\f@size{10}\check@mathfonts
\begin{align}\label{T.4003.2}
\|\widehat{T}\| \leq \frac{\big\|\,|T| + |T^D|\big\|
+ \big\|\,|T^*| + |(T^D)^*|\big\|}{4} \leq \|T\|.
\end{align}
\endgroup
\end{theorem}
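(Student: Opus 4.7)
The plan is to apply the mixed Schwarz inequality separately to the two summands $\lambda T$ and $(1-\lambda)T^D$ of $M_\lambda(T)$ and combine the resulting bounds via the arithmetic--geometric mean inequality. Recall that for any $A\in\mathbb{B}(\mathcal{H})$ and $x,y\in\mathcal{H}$ one has $|\langle Ax,y\rangle|^2\leq \langle|A|x,x\rangle\langle|A^*|y,y\rangle$, and applying AM--GM to the right-hand side yields
\[
|\langle Ax,y\rangle|\leq\tfrac{1}{2}\bigl(\langle|A|x,x\rangle+\langle|A^*|y,y\rangle\bigr).
\]

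Since $\lambda,1-\lambda\geq 0$, we have $|\lambda T|=\lambda|T|$ and $|(1-\lambda)T^D|=(1-\lambda)|T^D|$, and likewise $|(\lambda T)^*|=\lambda|T^*|$, $|((1-\lambda)T^D)^*|=(1-\lambda)|(T^D)^*|$. Applying the preceding bound to $\lambda T$ and to $(1-\lambda)T^D$ and summing, we obtain for any unit vectors $x,y$
\[
|\langle M_\lambda(T)x,y\rangle|\leq\tfrac{1}{2}\bigl\langle\bigl(\lambda|T|+(1-\lambda)|T^D|\bigr)x,x\bigr\rangle+\tfrac{1}{2}\bigl\langle\bigl(\lambda|T^*|+(1-\lambda)|(T^D)^*|\bigr)y,y\bigr\rangle.
\]
Taking the supremum over $\|x\|=\|y\|=1$, and using both $\|M_\lambda(T)\|=\sup_{\|x\|=\|y\|=1}|\langle M_\lambda(T)x,y\rangle|$ and $\|P\|=\sup_{\|x\|=1}\langle Px,x\rangle$ for positive $P$, yields the first inequality in \eqref{T.4003.1}.

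The second inequality in \eqref{T.4003.1} follows by applying the triangle inequality to each of the two norms in the middle expression (using $\||T|\|=\|T\|$, $\||T^D|\|=\|T^D\|$, and similarly for the adjoints) and averaging. For \eqref{T.4003.2}, I would specialize to $\lambda=1/2$, so that $M_{1/2}(T)=\widehat{T}$, pull the factor $1/2$ through each of the two norms in the middle of \eqref{T.4003.1}, and then invoke Proposition \ref{P.1003} (which gives $\|T^D\|\leq\|T\|$) to conclude that the right-hand side is bounded by $\|T\|$.

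The only substantive step is the mixed Schwarz inequality combined with AM--GM; the rest is a routine triangle-inequality argument, so no serious obstacle is anticipated.
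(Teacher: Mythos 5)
Your proposal is correct and follows essentially the same route as the paper: the mixed Schwarz inequality combined with the arithmetic--geometric mean inequality to bound $|\langle M_{\lambda}(T)x,y\rangle|$ by quadratic forms of $\lambda|T|+(1-\lambda)|T^D|$ and $\lambda|T^*|+(1-\lambda)|(T^D)^*|$, followed by the triangle inequality for the second bound and Proposition \ref{P.1003} with $\lambda=1/2$ for \eqref{T.4003.2}. The only cosmetic difference is that you absorb the scalars into the operators via $|\lambda T|=\lambda|T|$ before applying mixed Schwarz, whereas the paper first splits off the scalars by the triangle inequality; the two computations are identical.
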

\begin{proof}
Let $x, y \in\mathcal{H}$. Since $M_{\lambda}(T) = \lambda T + (1-\lambda)T^D$, we have
\begin{align*}
|\langle M_{\lambda}(T)x, y\rangle| & \leq
\lambda|\langle Tx, y\rangle| + (1-\lambda)|\langle T^Dx, y\rangle|
\\& \leq \lambda{\langle |T|x, x\rangle}^{1/2} {\langle |T^*|y, y\rangle}^{1/2}
+ (1-\lambda){\langle |T^D|x, x\rangle}^{1/2}{\langle |(T^D)^*|y, y\rangle}^{1/2}
\\&\qquad \qquad \big(\mbox{by the mixed Schwarz inequality (see \cite[p. 75]{Halmos})}\big)
\\& \leq \lambda \frac{\langle |T|x, x\rangle + \langle |T^*|y, y\rangle}{2}
+ (1-\lambda)\frac{\langle |T^D|x, x\rangle + \langle |(T^D)^*|y, y\rangle}{2}
\\& \qquad \qquad \qquad \quad \big(\mbox{by the arithmetic-geometric mean inequality}\big)
\\& = \frac{\big\langle\big(\lambda|T|+(1-\lambda)|T^D|\big)x, x\big\rangle
+ \big\langle\big(\lambda|T^*|+(1-\lambda)|(T^D)^*|\big)y, y\big\rangle}{2}
\\& \leq \frac{\big\|\lambda|T| + (1 - \lambda)|T^D|\big\|
+ \big\|\lambda|T^*| + (1 - \lambda)|(T^D)^*|\big\|}{2}.
\end{align*}
Thus
\begin{align*}
|\langle M_{\lambda}(T)x, y\rangle|
\leq \frac{\big\|\lambda|T| + (1 - \lambda)|T^D|\big\|
+ \big\|\lambda|T^*| + (1 - \lambda)|(T^D)^*|\big\|}{2},
\end{align*}
whence
\begin{align*}
\big\|M_{\lambda}(T)\big\| \leq \frac{\big\|\lambda|T| + (1 - \lambda)|T^D|\big\|
+ \big\|\lambda|T^*| + (1 - \lambda)|(T^D)^*|\big\|}{2}.
\end{align*}
To prove the second inequality in (\ref{T.4003.1}),
since $\big\||A^*|\big\| = \big\||A|\big\| = \|A\|$ for any $A\in\mathbb{B}(\mathcal{H})$,
by the triangle inequality we have
\begingroup\makeatletter\def\f@size{10}\check@mathfonts
\begin{align*}
\big\|\lambda|T| + (1 - \lambda)|T^D|\big\|
+ \big\|\lambda|T^*| + (1 - \lambda)|(T^D)^*|\big\|
\leq  2\lambda\|T\| + 2(1 - \lambda)\|T^D\|.
\end{align*}
\endgroup
Finally, for $\lambda=1/2$, by (\ref{T.4003.1}) and Proposition \ref{P.1003} we obtain (\ref{T.4003.2}).
\end{proof}
The following example shows that the inequality (\ref{T.4003.2}) is a nontrivial improvement.
\begin{example}\label{E.5003}
Let $T = \begin{bmatrix}
0 & 1 \\
0 & 1
\end{bmatrix}$.
Simple computations show that
the canonical polar decomposition of $T$ is $T = U|T|$,
where
$U = \frac{\sqrt{2}}{2}\begin{bmatrix}
0 & 1 \\
0 & 1
\end{bmatrix}$
and
$|T| = \sqrt{2}\begin{bmatrix}
0 & 0 \\
0 & 1
\end{bmatrix}$.
Hence
$T^D = \begin{bmatrix}
0 & 0 \\
0 & 1
\end{bmatrix}$.
We have
\begin{align*}
\|\widehat{T}\| = \frac{1}{2}\|T + T^D\| = \frac{1}{2}\left\|\begin{bmatrix}
0 & 1 \\
0 & 2
\end{bmatrix}\right\| \simeq 1.1180,
\end{align*}
\begin{align*}
\big\|\,|T| + |T^D|\big\| =
\left\|\begin{bmatrix}
0 & 0 \\
0 & \sqrt{2}
\end{bmatrix} +
\begin{bmatrix}
0 & 0 \\
0 & 1
\end{bmatrix}\right\|
= \left\|\begin{bmatrix}
0 & 0 \\
0 & \sqrt{2} +1
\end{bmatrix} \right\| \simeq 2.4142
\end{align*}
and
\begingroup\makeatletter\def\f@size{10}\check@mathfonts
\begin{align*}
\big\|\,|T^*| + |(T^D)^*|\big\|=
\left\|\begin{bmatrix}
\sqrt{2}/2 & \sqrt{2}/2 \\
\sqrt{2}/2 & \sqrt{2}/2
\end{bmatrix} +
\begin{bmatrix}
0 & 0 \\
0 & 1
\end{bmatrix}\right\|
= \left\|\begin{bmatrix}
\sqrt{2}/2 & \sqrt{2}/2 \\
\sqrt{2}/2 & \sqrt{2}/2 +1
\end{bmatrix} \right\| \simeq 2.0731.
\end{align*}
\endgroup
Thus
\begingroup\makeatletter\def\f@size{8}\check@mathfonts
\begin{align*}
\|\widehat{T}\|\simeq1.1180 < \frac{\big\|\,|T| + |T^D|\big\|
+ \big\|\,|T^*| + |(T^D)^*|\big\|}{4} \simeq 1.1218 < \frac{\|T\| + \|T^D\|}{2} \simeq 1.2071 < \|T\| \simeq 1.4142.
\end{align*}
\endgroup
\end{example}
The following result may be stated as well.
\begin{theorem}\label{T.6003}
Let $T\in\mathbb{B}(\mathcal{H})$ and let $\lambda \in [0, 1]$. Then
\begingroup\makeatletter\def\f@size{8}\check@mathfonts
\begin{align*}
\big\|M_{\lambda}(T)\big\|^4
&\leq \|Q_\lambda(T)\|^2
+ 4(\lambda - \lambda^2)^2\omega^2(T^*T^D)
+ 2(\lambda-\lambda^2)\omega\big(Q_\lambda(T)T^*T^D + T^*T^DQ_\lambda(T)\big),
\end{align*}
\endgroup
where $Q_\lambda(T) = \lambda^2|T|^2 + (1 - \lambda)^2|T^D|^2$.
\end{theorem}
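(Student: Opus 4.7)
The plan is to start by writing $|M_\lambda(T)|^2 = M_\lambda(T)^*M_\lambda(T)$ explicitly. Expanding $M_\lambda(T) = \lambda T + (1-\lambda)T^D$ gives
\begin{align*}
M_\lambda(T)^*M_\lambda(T) = \lambda^2|T|^2 + (1-\lambda)^2|T^D|^2 + \lambda(1-\lambda)\bigl(T^*T^D + (T^D)^*T\bigr),
\end{align*}
which I can rewrite as $M_\lambda(T)^*M_\lambda(T) = Q_\lambda(T) + 2\lambda(1-\lambda)\,\mathfrak{Re}(T^*T^D)$, since $(T^*T^D)^* = (T^D)^*T$. Setting $Q = Q_\lambda(T)$, $A = T^*T^D$, $R = \mathfrak{Re}(A)$ and $\mu = \lambda(1-\lambda)$, this reads $|M_\lambda(T)|^2 = Q + 2\mu R$.

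Next I would exploit the identity $\|M_\lambda(T)\|^4 = \bigl\|\,|M_\lambda(T)|^2\bigr\|^2 = \bigl\|\bigl(|M_\lambda(T)|^2\bigr)^2\bigr\|$, valid because $|M_\lambda(T)|^2$ is positive (and hence selfadjoint). Squaring gives
\begin{align*}
\bigl(Q + 2\mu R\bigr)^2 = Q^2 + 2\mu(QR + RQ) + 4\mu^2 R^2,
\end{align*}
and the triangle inequality for the operator norm splits $\|M_\lambda(T)\|^4$ into three pieces $\|Q^2\|$, $2\mu\|QR+RQ\|$, $4\mu^2\|R^2\|$.

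The three pieces are bounded as follows. First, since $Q$ is positive, $\|Q^2\| = \|Q\|^2 = \|Q_\lambda(T)\|^2$. Second, since $R$ is selfadjoint, $\|R^2\| = \|R\|^2 = \|\mathfrak{Re}(T^*T^D)\|^2$, and the identity (\ref{1.2}) yields $\|\mathfrak{Re}(A)\| \leq \omega(A) = \omega(T^*T^D)$, giving $4\mu^2\|R^2\| \leq 4(\lambda-\lambda^2)^2\omega^2(T^*T^D)$. Third, using that $Q$ is selfadjoint I compute
\begin{align*}
QR + RQ = \tfrac{1}{2}\bigl(QA + QA^* + AQ + A^*Q\bigr) = \mathfrak{Re}(QA + AQ),
\end{align*}
so that, again by (\ref{1.2}), $\|QR + RQ\| \leq \omega(QA + AQ) = \omega\bigl(Q_\lambda(T)T^*T^D + T^*T^D Q_\lambda(T)\bigr)$. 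Combining these three estimates gives exactly the claimed inequality.

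The main obstacle is bookkeeping: one has to recognise that the cross term $QR + RQ$ is itself the real part of an operator so that the numerical-radius bound applies, and that $\mu^2 = (\lambda-\lambda^2)^2$ and $\mu = \lambda-\lambda^2$ yield the displayed coefficients. Everything else follows from standard identities for positive and selfadjoint operators together with the basic inequality $\|\mathfrak{Re}(X)\| \leq \omega(X)$.
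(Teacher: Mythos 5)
Your proof is correct, and it reaches the stated bound by a more direct route than the paper. The paper never writes down $M_\lambda(T)^*M_\lambda(T)$ directly: it first dilates, using $\big\|\begin{bmatrix} 0 & M_\lambda(T) \\ M_\lambda(T)^* & 0\end{bmatrix}\big\| = \|M_\lambda(T)\|$, splits this matrix as $X + X^*$ with $X = \begin{bmatrix} 0 & \lambda T \\ (1-\lambda)(T^D)^* & 0\end{bmatrix}$, applies $\|X + X^*\| \leq 2\omega(X)$, and then uses (\ref{1.2}) to turn $\omega(X)$ into $\sup_{\theta}\|\lambda e^{i\theta}T + (1-\lambda)e^{-i\theta}T^D\|$; only then does it square via the $C^*$-identity, arriving at $\sup_{\theta}\big\|\big(Q_\lambda(T) + 2(\lambda-\lambda^2)\mathfrak{Re}(e^{-2i\theta}T^*T^D)\big)^2\big\|$, which it expands and estimates term by term exactly as you do, with the $\sup_\theta$'s absorbed back into numerical radii via (\ref{1.2}). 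You bypass the dilation and the phase parameter entirely: the identity $\|M_\lambda(T)\|^4 = \big\|\big(M_\lambda(T)^*M_\lambda(T)\big)^2\big\| = \big\|\big(Q_\lambda(T) + 2(\lambda-\lambda^2)\mathfrak{Re}(T^*T^D)\big)^2\big\|$ is exact, so the only losses in your argument are the final triangle inequality and $\|\mathfrak{Re}(\cdot)\| \leq \omega(\cdot)$, whereas the paper also gives away $\|X+X^*\| \leq 2\omega(X)$ and a supremum over phases before reaching the same expansion. The two proofs share the same algebraic core (the expansion of the square and the identification of the cross term as $\mathfrak{Re}\big(Q_\lambda(T)T^*T^D + T^*T^DQ_\lambda(T)\big)$, which is where selfadjointness of $Q_\lambda(T)$ is essential), but yours is shorter, stays in $\mathbb{B}(\mathcal{H})$ rather than $\mathbb{B}(\mathcal{H}\oplus\mathcal{H})$, and even records the slightly sharper intermediate fact $\|M_\lambda(T)\|^2 = \|Q_\lambda(T) + 2(\lambda-\lambda^2)\mathfrak{Re}(T^*T^D)\|$; the paper's dilation technique, on the other hand, is the one that generalizes to other off-diagonal operator-matrix inequalities in the spirit of the cited literature.
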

\begin{proof}
Notice first that, for every $A\in\mathbb{B}(\mathcal{H})$, it can be verified that
$\left\|\begin{bmatrix}
0 & A \\
A^* & 0
\end{bmatrix}\right\| = \|A\|$.
Also, by (\ref{1.2}), we have $\|A + A^*\| \leq 2\omega(A)$.
Therefore,
\begingroup\makeatletter\def\f@size{8}\check@mathfonts
\begin{align*}
\big\|M_{\lambda}(T)\big\|^4 &= \left\|\begin{bmatrix}
0 & M_{\lambda}(T) \\
M^*_{\lambda}(T) & 0
\end{bmatrix} \right\|^4
\\& = \left\|\begin{bmatrix}
0 & \lambda T\\
(1-\lambda)(T^D)^* & 0
\end{bmatrix}
+
\begin{bmatrix}
0 & (1-\lambda)T^D \\
\lambda T^* & 0
\end{bmatrix} \right\|^4
\\& = \left\|\begin{bmatrix}
0 & \lambda T\\
(1-\lambda)(T^D)^* & 0
\end{bmatrix}
+
{\begin{bmatrix}
0 & \lambda T \\
(1-\lambda)(T^D)^* & 0
\end{bmatrix}}^*\right\|^4
\\& \leq 16\,\omega^4\left(\begin{bmatrix}
0 & \lambda T\\
(1-\lambda)(T^D)^* & 0
\end{bmatrix}\right)
\\& = 16\,\displaystyle{\sup_{\theta \in \mathbb{R}}}
\left\|\mathfrak{Re}\left(e^{i\theta}\begin{bmatrix}
0 & \lambda T\\
(1-\lambda)(T^D)^* & 0
\end{bmatrix}\right)\right\|^4 \qquad \qquad \qquad \qquad \qquad \qquad \mbox{(by (\ref{1.2}))}
\\& = \displaystyle{\sup_{\theta \in \mathbb{R}}}
\left\|\begin{bmatrix}
0 & \lambda e^{i\theta}T + (1-\lambda)e^{-i\theta}T^D\\
\lambda e^{-i\theta}T^* + (1-\lambda)e^{i\theta}(T^D)^* & 0
\end{bmatrix}\right\|^4
\\& = \displaystyle{\sup_{\theta \in \mathbb{R}}}
\Big\|\lambda e^{i\theta}T + (1-\lambda)e^{-i\theta}T^D\Big\|^4
\\& = \displaystyle{\sup_{\theta \in \mathbb{R}}}
\Big\|\Big(\lambda e^{-i\theta}T^* + (1-\lambda)e^{i\theta}(T^D)^*\Big)^*
\Big(\lambda e^{i\theta}T + (1-\lambda)e^{-i\theta}T^D\Big)\Big\|^2
\\& = \displaystyle{\sup_{\theta \in \mathbb{R}}}
\Big\|\lambda^2|T|^2 + (1-\lambda)^2|T^D|^2 + 2(\lambda - \lambda^2)\mathfrak{Re}\big(e^{-2i\theta}T^*T^D\big)\Big\|^2
\\& = \displaystyle{\sup_{\theta \in \mathbb{R}}}
\Big\|\Big(Q_\lambda(T) + 2(\lambda - \lambda^2)\mathfrak{Re}\big(e^{-2i\theta}T^*T^D\big)\Big)^2\Big\|
\\& \leq \|Q_\lambda(T)\|^2 + 4(\lambda - \lambda^2)^2\displaystyle{\sup_{\theta \in \mathbb{R}}}\Big\|\mathfrak{Re}\big(e^{-2i\theta}T^*T^D\big)\Big\|^2
\\& \qquad \qquad \qquad \qquad + 2(\lambda - \lambda^2)\displaystyle{\sup_{\theta \in \mathbb{R}}}\Big\|\mathfrak{Re}\Big(e^{-2i\theta}\Big(Q_\lambda(T)T^*T^D + T^*T^DQ_\lambda(T)\Big)\Big)\Big\|
\\& = \|Q_\lambda(T)\|^2
+ 4(\lambda - \lambda^2)^2\omega^2(T^*T^D)
+ 2(\lambda-\lambda^2)\omega\big(Q_\lambda(T)T^*T^D + T^*T^DQ_\lambda(T)\big). \quad \mbox{(by (\ref{1.2}))}
\end{align*}
\endgroup
\end{proof}
\begin{remark}\label{R.7003}
In \cite{Fo.Ho}, it has been shown that if $A, B\in\mathbb{B}(\mathcal{H})$, then
\begin{align*}
\omega(AB + BA^*) \leq 2\|A\|\omega(B).
\end{align*}
So, we see that
\begingroup\makeatletter\def\f@size{8}\check@mathfonts
\begin{align*}
\|Q_\lambda(T)\|^2
&+ 4(\lambda - \lambda^2)^2\omega^2(T^*T^D)
+ 2(\lambda-\lambda^2)\omega\big(Q_\lambda(T)T^*T^D + T^*T^DQ_\lambda(T)\big)
\\& \leq \|Q_\lambda(T)\|^2
+ 4(\lambda - \lambda^2)^2\omega^2(T^*T^D)
+ 4(\lambda-\lambda^2)\|Q_\lambda(T)\|\omega\big(T^*T^D\big)
\\& \leq \|Q_\lambda(T)\|^2
+ 4\lambda(1 - \lambda)\big\|T^*T^D\big\|\Big(\lambda(1-\lambda)\big\|T^*T^D\big\| + \|Q_\lambda(T)\|\Big) \qquad \quad \Big(\mbox{by (\ref{1.1})}\Big)
\\& \leq \lambda^4\|T\|^4 + 2\lambda^2(1-\lambda)^2\|T\|^2\|T^D\|^2 + (1-\lambda)^4\|T^D\|^4
\\& \qquad \qquad \qquad + 4\lambda(1 - \lambda)\|T\|\|T^D\|\Big(\lambda(1-\lambda)\|T\|\|T^D\| + \lambda^2\|T\|^2 + (1-\lambda)^2\|T^D\|^2\Big)
\\& = \big(\lambda\|T\| + (1-\lambda)\|T^D\|\big)^4.
\end{align*}
\endgroup
Hence, Theorem \ref{T.6003} is an improvement of the second inequality in (\ref{T.2003.1}).
\end{remark}
In the following theorem, we give a necessary and sufficient condition for the
equality $\big\|M_{\lambda}(T)\big\| = \|T\|$.
\begin{theorem}\label{T.8003}
Let $T\in\mathbb{B}(\mathcal{H})$ and let $\lambda \in (0, 1)$.
Then the following statements are equivalent:
\begin{itemize}
\item[(i)] $\big\|M_{\lambda}(T)\big\| = \|T\|$.
\item[(ii)] There exists a sequence of unit vectors $\{x_n\}$ in $\mathcal{H}$ such that
\begin{align*}
\displaystyle{\lim_{n\rightarrow +\infty}}\langle Tx_n, T^Dx_n\rangle = \|T\|^2.
\end{align*}
\end{itemize}
\end{theorem}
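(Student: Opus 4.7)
The plan rests on the expansion
\begin{align*}
\bigl\|M_\lambda(T)x\bigr\|^2 = \lambda^2\|Tx\|^2 + (1-\lambda)^2\|T^Dx\|^2 + 2\lambda(1-\lambda)\mathfrak{Re}\langle Tx, T^Dx\rangle,
\end{align*}
valid for every $x\in\mathcal{H}$, together with the a priori bounds $\|T^D\|\le\|T\|$ from Proposition \ref{P.1003} and $\bigl\|M_\lambda(T)\bigr\|\le\|T\|$ from Theorem \ref{T.2003}. These reduce (i) to verifying the reverse inequality $\bigl\|M_\lambda(T)\bigr\|\ge\|T\|$.

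For the implication (ii)$\Rightarrow$(i), the hypothesis and the Cauchy--Schwarz inequality yield
\begin{align*}
\|T\|^2 = \lim_{n\to\infty}\langle Tx_n, T^Dx_n\rangle \le \limsup_{n\to\infty}\|Tx_n\|\,\|T^Dx_n\| \le \|T\|\,\|T^D\| \le \|T\|^2,
\end{align*}
so every inequality above is tight in the limit. In particular $\|T^D\|=\|T\|$, $\|Tx_n\|\to\|T\|$, and $\|T^Dx_n\|\to\|T\|$. Substituting into the identity above gives $\bigl\|M_\lambda(T)x_n\bigr\|^2\to(\lambda+(1-\lambda))^2\|T\|^2=\|T\|^2$, which produces (i).

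For (i)$\Rightarrow$(ii), I would select a sequence of unit vectors $\{x_n\}$ with $\bigl\|M_\lambda(T)x_n\bigr\|\to\|T\|$. Each of the three summands in the identity is dominated by $\lambda^2\|T\|^2$, $(1-\lambda)^2\|T\|^2$, and $2\lambda(1-\lambda)\|T\|^2$ respectively (using Cauchy--Schwarz together with Proposition \ref{P.1003}), and these caps sum to exactly $\|T\|^2$. Since the total converges to this maximum while each term is individually capped, a standard saturation argument (any strict deficit in a single $\limsup$ would drag the sum below $\|T\|^2$) forces all three summands to attain their caps in the limit; the assumption $\lambda\in(0,1)$ is needed here to ensure that each weight $\lambda^2$, $(1-\lambda)^2$, $2\lambda(1-\lambda)$ is strictly positive. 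Hence $\|Tx_n\|\to\|T\|$, $\|T^Dx_n\|\to\|T\|$, and $\mathfrak{Re}\langle Tx_n,T^Dx_n\rangle\to\|T\|^2$.

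The main subtlety, and the step I expect to be the most delicate, is that (ii) requires convergence of the \emph{full} complex inner product to $\|T\|^2$, not merely its real part. To bridge the gap I would set $z_n=\langle Tx_n,T^Dx_n\rangle$ and combine the identity $|z_n|^2=(\mathfrak{Re}\,z_n)^2+(\mathfrak{Im}\,z_n)^2$ with the bound $|z_n|^2\le\|Tx_n\|^2\|T^Dx_n\|^2\to\|T\|^4=\bigl(\lim\mathfrak{Re}\,z_n\bigr)^2$, which squeezes $\mathfrak{Im}\,z_n\to 0$ and therefore gives $\langle Tx_n,T^Dx_n\rangle\to\|T\|^2$, completing the proof.
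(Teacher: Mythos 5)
Your proof is correct and takes essentially the same route as the paper's: both directions rest on the expansion of $\big\|M_{\lambda}(T)x_n\big\|^2$, the bound $\|T^D\|\leq\|T\|$ from Proposition \ref{P.1003}, saturation/squeezing arguments forcing $\|Tx_n\|\to\|T\|$, $\|T^Dx_n\|\to\|T\|$ and $\mathfrak{Re}\langle Tx_n, T^Dx_n\rangle\to\|T\|^2$, and the identical Cauchy--Schwarz squeeze $|\langle Tx_n, T^Dx_n\rangle|^2=\mathfrak{Re}^2\langle Tx_n, T^Dx_n\rangle+\mathfrak{Im}^2\langle Tx_n, T^Dx_n\rangle$ to kill the imaginary part. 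The only (immaterial) organizational difference is that in (i)$\Rightarrow$(ii) the paper first saturates the triangle-inequality chain $\|\lambda Tx_n+(1-\lambda)T^Dx_n\|\leq\lambda\|Tx_n\|+(1-\lambda)\|T\|\leq\|T\|$ to get the two norm limits and then reads off the real part from the expansion, whereas you saturate all three capped summands of the expansion simultaneously.
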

\begin{proof}
We may and shall assume that $T^D\neq 0$ otherwise the equivalence
(i)$\Leftrightarrow$(ii) trivially holds.\\
(i)$\Rightarrow$(ii) Let $\big\|M_{\lambda}(T)\big\| = \|T\|$.
Since
\begin{align*}
\big\|M_{\lambda}(T)\big\| = \displaystyle{\sup_{\|x\| = 1}}\Big\|\big(\lambda T + (1-\lambda)T^D\big)x\Big\|,
\end{align*}
so there exists a sequence of unit vectors $\{x_n\}$ in $\mathcal{H}$ such that
\begin{align}\label{I.1.T.4}
\displaystyle{\lim_{n\rightarrow +\infty}}\Big\|\lambda Tx_n + (1-\lambda)T^Dx_n\Big\| = \|T\|.
\end{align}
For every $n\in \mathbb{N}$, we have
\begin{align*}
\Big\|\lambda Tx_n + (1-\lambda)T^Dx_n\Big\|&\leq \lambda\|Tx_n\| + (1-\lambda)\|T^Dx_n\|
\\&\leq \lambda\|Tx_n\| + (1-\lambda)\|T^D\|
\\& \leq \lambda\|Tx_n\| + (1-\lambda)\|T\| \qquad \qquad \big(\mbox{by Proposition \ref{P.1003}}\big)
\\& \leq \lambda\|T\| + (1-\lambda)\|T\| = \|T\|,
\end{align*}
whence
\begin{align}\label{I.2.T.4.1}
\displaystyle{\lim_{n\rightarrow +\infty}}\|Tx_n\| = \|T\|.
\end{align}
Similarly, we obtain
\begin{align}\label{I.2.T.4.2}
\displaystyle{\lim_{n\rightarrow +\infty}}{\|T^Dx_n\|}_{A} = \|T\|.
\end{align}
Since
\begingroup\makeatletter\def\f@size{10}\check@mathfonts
\begin{align*}
\Big\|\lambda Tx_n + (1-\lambda)T^Dx_n\Big\|^2 = \lambda^2\|Tx_n\|^2
+ 2(\lambda-\lambda^2)\mathfrak{Re}\langle Tx_n, T^Dx_n\rangle + (1-\lambda)^2\|T^Dx_n\|^2
\end{align*}
\endgroup
for every $n\in \mathbb{N}$, from (\ref{I.1.T.4}), (\ref{I.2.T.4.1}), and (\ref{I.2.T.4.2}), we obtain
\begin{align}\label{I.3.T.4}
\displaystyle{\lim_{n\rightarrow +\infty}}\mathfrak{Re}\langle Tx_n, T^Dx_n\rangle = \|T\|^2.
\end{align}
In addition, by proposition \ref{P.1003}, we have
\begingroup\makeatletter\def\f@size{10}\check@mathfonts
\begin{align*}
\mathfrak{Re}^2\langle Tx_n, T^Dx_n\rangle + \mathfrak{Im}^2\langle Tx_n, T^Dx_n\rangle
= |\langle Tx_n, T^Dx_n\rangle|^2 \leq \|T\|\,\|T^D\| \leq \|T\|^2,
\end{align*}
\endgroup
for every $n\in \mathbb{N}$.
Hence, by (\ref{I.3.T.4}), we conclude that
$\displaystyle{\lim_{n\rightarrow +\infty}}\mathfrak{Im}\langle Tx_n, T^Dx_n\rangle = 0$.
Now (\ref{I.3.T.4}) implies that
$\displaystyle{\lim_{n\rightarrow +\infty}}\langle Tx_n, T^Dx_n\rangle = \|T\|^2$.

(ii)$\Rightarrow$(i) Suppose that there exists a sequence of unit vectors $\{x_n\}$ in $\mathcal{H}$ such that
$\displaystyle{\lim_{n\rightarrow +\infty}}\langle Tx_n, T^Dx_n\rangle = \|T\|^2$.
Hence $\displaystyle{\lim_{n\rightarrow +\infty}}\mathfrak{Re}\langle Tx_n, T^Dx_n\rangle = \|T\|^2$.
Since
\begin{align*}
|\langle Tx_n, T^Dx_n\rangle|
\leq \|Tx_n\|\,\|T^Dx_n\|\leq \|Tx_n\|\,\|T^D\| \leq \|Tx_n\|\,\|T\| \leq \|T\|^2
\end{align*}
for every $n\in \mathbb{N}$, we obtain $\displaystyle{\lim_{n\rightarrow +\infty}}\|Tx_n\| = \|T\|$ and $\|T^D\| = \|T\|$.
Also, by a similar argument, we get
$\displaystyle{\lim_{n\rightarrow +\infty}}\|T^Dx_n\| = \|T\|$.
Therefore, by (\ref{T.2003.2}) it follows that
\begingroup\makeatletter\def\f@size{10}\check@mathfonts
\begin{align*}
\|T\|^2&= \lambda^2\displaystyle{\lim_{n\rightarrow +\infty}}\|Tx_n\|^2
+ 2(\lambda-\lambda^2)\displaystyle{\lim_{n\rightarrow +\infty}}\mathfrak{Re}\langle Tx_n, T^Dx_n\rangle
+ (1-\lambda)^2\displaystyle{\lim_{n\rightarrow +\infty}}\|T^Dx_n\|^2
\\& = \displaystyle{\lim_{n\rightarrow +\infty}}\Big\|M_{\lambda}(T)x_n\Big\|^2 \leq \Big\|M_{\lambda}(T)\Big\|^2 \leq \|T\|^2,
\end{align*}
\endgroup
which gives $\big\|M_{\lambda}(T)\big\| = \|T\|$.
\end{proof}
If $T\in\mathbb{B}(\mathcal{H})$, then
$\big\|M_{\lambda}(T)\big\| \leq \lambda\|T\| + (1-\lambda)\|T^D\|$
and hence
\begin{align}\label{T.11003.01}
\big\|M_{\lambda}(T)\big\| \leq 2\max\{\lambda\|T\|, (1-\lambda)\|T^D\|\}.
\end{align}
In the following, we mimic \cite[Theorem 2.5]{H.K.S.2} to prove a condition for the equality in (\ref{T.11003.01}).
To do this, we need the following two lemmas.
The first lemma is well-known and can be found in \cite[Theorem 2.1]{B.B}.
\begin{lemma}\label{L.10003}
Let $A, B\in\mathbb{B}(\mathcal{H})$. Then the equation
$\|A + B\| = \|A\| + \|B\|$ holds if and only if $\|A\|\|B\| \in \overline{W\big(A^*B\big)}$.
\end{lemma}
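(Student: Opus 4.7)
The plan is to establish both directions of the equivalence by working with approximating sequences of unit vectors that nearly attain the operator norms, exploiting the Cauchy--Schwarz inequality to pin down the behaviour of $\langle A^*Bx_n,x_n\rangle = \langle Bx_n, Ax_n\rangle$.

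For the forward direction, suppose $\|A+B\| = \|A\|+\|B\|$. First I would pick a sequence of unit vectors $\{x_n\}$ with $\|(A+B)x_n\| \to \|A\|+\|B\|$. The triangle inequality gives $\|(A+B)x_n\| \le \|Ax_n\| + \|Bx_n\| \le \|A\|+\|B\|$, which forces $\|Ax_n\| \to \|A\|$ and $\|Bx_n\| \to \|B\|$ (both upper bounds must be saturated in the limit). Next, expanding
\begin{align*}
\|(A+B)x_n\|^2 = \|Ax_n\|^2 + 2\mathfrak{Re}\langle Ax_n, Bx_n\rangle + \|Bx_n\|^2
\end{align*}
and taking limits yields $\mathfrak{Re}\langle Ax_n, Bx_n\rangle \to \|A\|\|B\|$. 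Since $|\langle Ax_n, Bx_n\rangle| \le \|Ax_n\|\|Bx_n\| \le \|A\|\|B\|$, the imaginary part must vanish in the limit, so $\langle Ax_n, Bx_n\rangle \to \|A\|\|B\|$. Taking complex conjugates then gives $\langle A^*Bx_n, x_n\rangle = \overline{\langle Ax_n, Bx_n\rangle} \to \|A\|\|B\|$, proving $\|A\|\|B\| \in \overline{W(A^*B)}$.

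For the backward direction, suppose $\|A\|\|B\| \in \overline{W(A^*B)}$. Choose unit vectors $\{x_n\}$ with $\langle A^*Bx_n, x_n\rangle \to \|A\|\|B\|$, or equivalently $\langle Bx_n, Ax_n\rangle \to \|A\|\|B\|$ after conjugating. The Cauchy--Schwarz chain $|\langle Bx_n, Ax_n\rangle| \le \|Ax_n\|\|Bx_n\| \le \|A\|\|B\|$ then forces both $\|Ax_n\| \to \|A\|$ and $\|Bx_n\| \to \|B\|$, since the outer bound is attained asymptotically. Moreover, the real part of $\langle Ax_n, Bx_n\rangle$ tends to $\|A\|\|B\|$. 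Plugging into the same expansion of $\|(A+B)x_n\|^2$ gives $\|(A+B)x_n\|^2 \to (\|A\|+\|B\|)^2$, hence $\|A+B\| \ge \|A\|+\|B\|$; combined with the triangle inequality, equality holds.

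The conceptually clean part is that both directions pivot on the single observation that the Cauchy--Schwarz bound $|\langle Ax_n, Bx_n\rangle| \le \|A\|\|B\|$ becoming tight automatically forces $\|Ax_n\|\to\|A\|$, $\|Bx_n\|\to\|B\|$, and the inner product's argument to vanish. The only mild subtlety is the bookkeeping between $\langle Ax_n, Bx_n\rangle$ and $\langle A^*Bx_n, x_n\rangle$, which differ by a complex conjugate but agree in the limit since $\|A\|\|B\|$ is real; no further obstacle is anticipated beyond getting this pairing right.
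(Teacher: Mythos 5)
Your proof is correct, but it is worth pointing out that the paper itself does not prove this lemma at all: it is quoted as known, with the proof deferred to the reference \cite{B.B} (Barraa and Boumazgour, Theorem 2.1), so there is no internal argument to compare against. What your argument buys is a self-contained, elementary proof by approximating sequences of unit vectors: expanding $\|(A+B)x_n\|^2$, noting that saturation of the chain $\|(A+B)x_n\|\leq\|Ax_n\|+\|Bx_n\|\leq\|A\|+\|B\|$ pins down $\|Ax_n\|\to\|A\|$ and $\|Bx_n\|\to\|B\|$, and using the Cauchy--Schwarz bound $|\langle Ax_n,Bx_n\rangle|\leq\|A\|\,\|B\|$ to kill the imaginary part in the limit. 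This is in fact exactly the technique the paper deploys later, in the proof of Theorem \ref{T.8003} (the characterization of $\big\|M_{\lambda}(T)\big\|=\|T\|$) and again in Remark \ref{R.11004}, so your proof fits naturally into the paper's own toolkit and makes the presentation self-contained where the paper relies on an external citation. One small point to tidy up: in the backward direction, your claim that asymptotic saturation of $|\langle Bx_n,Ax_n\rangle|\leq\|Ax_n\|\,\|Bx_n\|\leq\|A\|\,\|B\|$ forces $\|Ax_n\|\to\|A\|$ and $\|Bx_n\|\to\|B\|$ is only valid when $\|A\|\,\|B\|>0$; if $A=0$ or $B=0$ both sides of the equivalence hold trivially (since then $A^*B=0$ and $\overline{W(A^*B)}=\{0\}$), so you should dispose of that degenerate case in one sentence before assuming both operators are nonzero.
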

The second lemma is interesting on its own right.
\begin{lemma}\label{L.9003}
Let $T\in\mathbb{B}(\mathcal{H})$ and let $\lambda \in (0, 1)$.
Then the following conditions are equivalent:
\begin{itemize}
\item[(i)] $\|\widehat{T}\| = \frac{1}{2}\big(\|T\| + \|T^D\|\big)$.
\item[(ii)] $\|M_{\lambda}(T)\| = \lambda\|T\| + (1-\lambda)\|T^D\|$.
\end{itemize}
\end{lemma}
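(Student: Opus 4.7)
The plan is to recognize that both equalities in (i) and (ii) are instances of the triangle-inequality equality case, and that Lemma \ref{L.10003} translates both of them into the \emph{same} membership condition involving the numerical range of $T^*T^D$.

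First I would rewrite (i) in the form $\|T + T^D\| = \|T\| + \|T^D\|$ by multiplying through by $2$, using $2\widehat{T} = T + T^D$. Applying Lemma \ref{L.10003} with $A = T$ and $B = T^D$ gives that (i) is equivalent to
\begin{align*}
\|T\|\,\|T^D\| \in \overline{W(T^*T^D)}.
\end{align*}

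Next I would do the analogous translation for (ii). Since $\lambda \in (0,1)$, both $\lambda$ and $1-\lambda$ are strictly positive, so $\|\lambda T\| = \lambda\|T\|$ and $\|(1-\lambda)T^D\| = (1-\lambda)\|T^D\|$, and (ii) becomes $\|\lambda T + (1-\lambda)T^D\| = \|\lambda T\| + \|(1-\lambda)T^D\|$. Applying Lemma \ref{L.10003} with $A = \lambda T$ and $B = (1-\lambda)T^D$, and using the homogeneity $W(cX) = c\,W(X)$ for $c \in \mathbb{C}$ (here with $c = \lambda(1-\lambda) > 0$), I would conclude that (ii) is equivalent to
\begin{align*}
\lambda(1-\lambda)\,\|T\|\,\|T^D\| \in \lambda(1-\lambda)\,\overline{W(T^*T^D)},
\end{align*}
which, upon dividing by $\lambda(1-\lambda) > 0$, is exactly the same condition $\|T\|\,\|T^D\| \in \overline{W(T^*T^D)}$ obtained for (i). The equivalence (i)$\Leftrightarrow$(ii) then follows immediately.

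There is no real obstacle here; the only things to be careful about are (a) checking that the positivity of $\lambda$ and $1-\lambda$ lets us pull scalars out of the norms and out of the numerical range without changing the set, and (b) noting that Lemma \ref{L.10003} is applied with the two summands in the order $A$ then $B$, which matches the appearance of $T^*T^D$ (not $(T^D)^*T$) in both translations so the conditions really do coincide.
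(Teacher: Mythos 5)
Your proof is correct, but it takes a genuinely different route from the paper's. You translate both equalities through Lemma \ref{L.10003}: condition (i) becomes $\|T\|\,\|T^D\|\in\overline{W(T^*T^D)}$, and, after pulling the positive scalars $\lambda$ and $1-\lambda$ out of the norms and out of the (closure of the) numerical range, condition (ii) becomes the very same membership, so the two are equivalent; the scalar bookkeeping you flag in (a) and (b) is exactly right, and the common ordering producing $T^*T^D$ in both applications is what makes the conditions coincide. The paper never invokes Lemma \ref{L.10003} here: it instead uses the elementary normed-space fact that equality in the triangle inequality $\|x+y\|=\|x\|+\|y\|$ is preserved when $x$ and $y$ are replaced by positive multiples $\alpha x$, $\beta y$; since $\lambda$ and $1-\lambda$ are strictly positive, one passes back and forth between the pairs $\big(\tfrac12 T,\tfrac12 T^D\big)$ and $\big(\lambda T,(1-\lambda)T^D\big)$ by such rescalings, and the equivalence follows with no Hilbert-space structure at all. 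What each approach buys: the paper's argument is more elementary, requires neither adjoints nor numerical ranges, and works verbatim in any normed space; your argument is Hilbert-space specific, but it has the virtue of exhibiting the single invariant condition $\|T\|\,\|T^D\|\in\overline{W(T^*T^D)}$ underlying both (i) and (ii) --- which is precisely the condition that drives Theorem \ref{T.11003} and Corollary \ref{C.12003}, so your route arguably explains better why this lemma sits where it does in the paper.
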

\begin{proof}
It is not hard to see that the equality case in the triangle
inequality for two elements $x$ and $y$ in a normed linear space $\mathcal{X}$ is preserved for
their positive multiples $\alpha x$ ($\alpha\geq 0$) and $\beta y$ ($\beta\geq0$).
Therefore, the proof follows immediately from the above property.
\end{proof}
\begin{theorem}\label{T.11003}
Let $T\in\mathbb{B}(\mathcal{H})$ and let $\lambda \in (0, 1)$.
Then the following statements are equivalent:
\begin{itemize}
\item[(i)] $\|M_{\lambda}(T)\| = 2\max\{\lambda\|T\|, (1-\lambda)\|T^D\|\}$.
\item[(ii)] $\omega(T^*T^D) = \frac{1}{\lambda - \lambda^2}\max\{\lambda^2\|T\|^2, (1-\lambda)^2\|T^D\|^2\}$.
\end{itemize}
\end{theorem}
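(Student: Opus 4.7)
The plan is to mimic \cite[Theorem 2.5]{H.K.S.2} by treating (i) and (ii) as parallel statements about $A = \lambda T$ and $B = (1-\lambda)T^D$, with $A+B = M_\lambda(T)$ and $A^*B = \lambda(1-\lambda)T^*T^D$. Write $s = \|T\|$ and $t = \|T^D\|$; the whole argument hinges on an internal balance condition $\lambda s = (1-\lambda)t$ that each of (i) and (ii) will be shown to force.

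First I will set up two parallel pinching chains. From the triangle inequality together with $a + b \le 2\max\{a, b\}$ (tight iff $a=b$),
\[
\|M_\lambda(T)\| \le \lambda s + (1-\lambda)t \le 2\max\{\lambda s, (1-\lambda)t\},
\]
and from $\omega(T^*T^D) \le \|T^*T^D\| \le st$ together with the elementary bound $\max\{a^2, b^2\} \ge ab$ applied to $a = \lambda s$, $b = (1-\lambda)t$ (tight iff $a = b$),
\[
\omega(T^*T^D) \le st \le \frac{\max\{\lambda^2 s^2, (1-\lambda)^2 t^2\}}{\lambda(1-\lambda)}.
\]
Hence each of (i) and (ii) forces $\lambda s = (1-\lambda) t$, and under this balance they reduce respectively to the triangle equality $\|M_\lambda(T)\| = \lambda s + (1-\lambda)t$ and to $\omega(T^*T^D) = st$.

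Next I will link the two reduced conditions through Lemma~\ref{L.10003}. Applied to $A = \lambda T$ and $B = (1-\lambda)T^D$, that lemma states the triangle equality holds iff $\|A\|\|B\| = \lambda(1-\lambda)st \in \overline{W(A^*B)} = \lambda(1-\lambda)\overline{W(T^*T^D)}$, equivalently $st \in \overline{W(T^*T^D)}$. Since $st \in \overline{W(T^*T^D)}$ immediately yields $\omega(T^*T^D) \ge st$, and we always have $\omega(T^*T^D) \le st$, this forces $\omega(T^*T^D) = st$; this disposes of (i)$\Rightarrow$(ii).

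For the converse (ii)$\Rightarrow$(i), starting from $\omega(T^*T^D) = st$ I extract unit vectors $\{x_n\}$ with $|\langle T^*T^D x_n, x_n\rangle| \to st$. The chain $|\langle T^*T^D x_n, x_n\rangle| = |\langle T^D x_n, T x_n\rangle| \le \|T^D x_n\|\,\|T x_n\| \le st$ forces $\|T x_n\| \to s$, $\|T^D x_n\| \to t$, and asymptotic parallelism of $T^D x_n$ and $T x_n$. The main obstacle is to upgrade this to $\langle T^*T^D x_n, x_n\rangle \to st$ as a \emph{positive real}, not merely a number of modulus $st$; only then does $st \in \overline{W(T^*T^D)}$ follow, so that a second application of Lemma~\ref{L.10003} closes the argument. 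To handle this I plan to exploit the polar-decomposition form $T^*T^D = |T|U^*|T|U$ together with the fact that $\sigma(T^*T^D) \subseteq [0, \infty)$---which follows from the similarity of $T^*T^D$ to the positive operator $|\widetilde T|^2 = |T|^{1/2}U^*|T|U|T|^{1/2}$ (with an approximation argument if $|T|$ is non-invertible)---to rule out an extraneous phase in the limiting numerical-range value.
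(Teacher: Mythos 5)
Your proposal is correct, and its skeleton coincides with the paper's: both directions run through the two equality-forcing chains $\|M_\lambda(T)\|\le\lambda\|T\|+(1-\lambda)\|T^D\|\le 2\max\{\lambda\|T\|,(1-\lambda)\|T^D\|\}$ and $\omega(T^*T^D)\le\|T^*T^D\|\le\|T\|\,\|T^D\|\le\tfrac{1}{\lambda-\lambda^2}\max\{\lambda^2\|T\|^2,(1-\lambda)^2\|T^D\|^2\}$, each of (i), (ii) forces the balance $\lambda\|T\|=(1-\lambda)\|T^D\|$, and Lemma~\ref{L.10003} is the bridge between the triangle equality and membership of $\|T\|\,\|T^D\|$ in $\overline{W(T^*T^D)}$. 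Two differences are worth recording. A cosmetic one: you apply Lemma~\ref{L.10003} directly to $A=\lambda T$, $B=(1-\lambda)T^D$, while the paper first invokes the rescaling Lemma~\ref{L.9003} to pass to $\widehat{T}$ and then applies Lemma~\ref{L.10003} to the pair $T$, $T^D$; both work, and yours is a line shorter. A substantive one: in (ii)$\Rightarrow$(i) the paper passes from $\omega(T^*T^D)=\|T\|\,\|T^D\|$ to ``$\|T\|\,\|T^D\|\in\overline{W(T^*T^D)}$'' with no justification, and for a general operator this inference is false (take $A=-I$: then $\omega(A)=1$ but $1\notin\overline{W(A)}=\{-1\}$), since the numerical radius only produces a point of the right \emph{modulus}, not the positive real itself. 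You are right that a phase must be excluded, and your plan for excluding it is the correct one; the only link left implicit in your sketch is how spectral information reaches the numerical-range limit. It is supplied by the Cauchy--Schwarz near-equality argument applied to $T^*T^Dx_n$ against $x_n$ (rather than to $T^Dx_n$ against $Tx_n$): choose unit vectors $x_n$ and $\xi$ with $\langle T^*T^Dx_n,x_n\rangle\to\xi$ and $|\xi|=\omega(T^*T^D)$; your chain also gives $\|T^*T^D\|=\|T\|\,\|T^D\|=|\xi|$, so $\|T^*T^Dx_n-\xi x_n\|^2\le\|T^*T^D\|^2-2\,\mathfrak{Re}\big(\overline{\xi}\langle T^*T^Dx_n,x_n\rangle\big)+|\xi|^2\to 0$, whence $\xi$ is an approximate eigenvalue and $\xi\in\sigma(T^*T^D)$. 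Moreover the nonnegativity of this spectrum needs neither invertibility nor approximation: by the identity $\sigma(XY)\cup\{0\}=\sigma(YX)\cup\{0\}$ with $X=|T|^{1/2}$ and $Y=|T|^{1/2}U^*|T|U$, one gets $\sigma(T^*T^D)\setminus\{0\}=\sigma(|\widetilde{T}|^2)\setminus\{0\}\subseteq(0,\infty)$, so $\xi=\|T\|\,\|T^D\|$ and Lemma~\ref{L.10003} applies. With that bridge spelled out your argument is complete---and, unlike the paper's own proof, it actually justifies the one step where the theorem could fail.
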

\begin{proof}
(i)$\Rightarrow$(ii) Let $\|M_{\lambda}(T)\| = 2\max\{\lambda\|T\|, (1-\lambda)\|T^D\|\}$. Since
\begin{align*}
\|M_{\lambda}(T)\| \leq \lambda\|T\| + (1-\lambda)\|T^D\| \leq 2\max\{\lambda\|T\|, (1-\lambda)\|T^D\|\},
\end{align*}
we get
$\lambda\|T\| + (1-\lambda)\|T^D\| = 2\max\{\lambda\|T\|, (1-\lambda)\|T^D\|\}$.
Hence
\begin{align}\label{T.11003.1}
\lambda\|T\| = (1-\lambda)\|T^D\| = \max\{\lambda\|T\|, (1-\lambda)\|T^D\|\}.
\end{align}
Therefore,
$\|M_{\lambda}(T)\| = \lambda\|T\| + (1-\lambda)\|T^D\|$,
and by Lemma \ref{L.9003}, we obtain $\|\widehat{T}\| = \frac{1}{2}\big(\|T\| + \|T^D\|\big)$,
or equivalently, $\|T + T^D\| = \|T\| + \|T^D\|$.
By Lemma \ref{L.10003} it follows that $\|T\|\,\|T^D\| \in \overline{W\big(T^*T^D\big)}$
and so $\|T\|\,\|T^D\| \leq \omega\big(T^*T^D\big)$.
Now (\ref{T.11003.1}) implies that
\begin{align}\label{I.1.T.5}
\frac{1}{\lambda - \lambda^2}\max\{\lambda^2\|T\|^2, (1-\lambda)^2\|T^D\|^2\} = \|T\|\,\|T^D\| \leq \omega\big(T^*T^D\big).
\end{align}
On the other hand, by the second inequality in (\ref{1.1}) and the arithmetic-geometric mean inequality, we have
\begingroup\makeatletter\def\f@size{10}\check@mathfonts
\begin{align}\label{I.3.T.4.5}
\omega\big(T^*T^D\big) &\leq \big\|T^*T^D\big\| \leq \|T^*\|\,\|T^D\|
= \frac{\|\lambda T\|\,\|(1-\lambda)T^D\|}{\lambda - \lambda^2}\nonumber
\\& \leq \frac{\lambda^2\|T\|^2 + (1-\lambda)^2\|T^D\|^2}{2(\lambda - \lambda^2)}
\leq \frac{\max\{\lambda^2\|T\|^2, (1-\lambda)^2\|T^D\|^2\}}{\lambda - \lambda^2}.
\end{align}
\endgroup
Thus
\begin{align}\label{I.3.T.5}
\omega(T^*T^D) \leq \frac{1}{\lambda - \lambda^2}\max\{\lambda^2\|T\|^2, (1-\lambda)^2\|T^D\|^2\}.
\end{align}
By (\ref{I.1.T.5}) and (\ref{I.3.T.5}), we conclude that
\begin{align*}
\omega(T^*T^D) = \frac{1}{\lambda - \lambda^2}\max\{\lambda^2\|T\|^2, (1-\lambda)^2\|T^D\|^2\}.
\end{align*}

(ii)$\Rightarrow$(i) Let $\omega(T^*T^D) = \frac{1}{\lambda - \lambda^2}\max\{\lambda^2\|T\|^2, (1-\lambda)^2\|T^D\|^2\}$.
From (\ref{I.3.T.4.5}) it follows that $\lambda\|T\| = (1-\lambda)\|T^D\|$ and
$\omega(T^*T^D) = \|T\|\,\|T^D\|$. Thus
$\|T\|\,\|T^D\|\in\overline{W\big(T^*T^D\big)}$.
Now, by Lemma \ref{L.10003}, we obtain
$\|T + T^D\| = \|T\| + \|T^D\|$, or equivalently, $\|\widehat{T}\| = \frac{1}{2}\big(\|T\| + \|T^D\|\big)$.
Hence by Lemma \ref{L.9003} we get $\|M_{\lambda}(T)\| = \lambda\|T\| + (1-\lambda)\|T^D\|$
and so $\|M_{\lambda}(T)\| = 2\max\{\lambda\|T\|, (1-\lambda)\|T^D\|\}$.
\end{proof}
As a consequence of Theorem \ref{T.11003} and Proposition \ref{P.1003}, we have the following result.
\begin{corollary}\label{C.12003}
Let $T\in\mathbb{B}(\mathcal{H})$. Then the following statements are equivalent:
\begin{itemize}
\item[(i)] $\|\widehat{T}\| = \|T\|$.
\item[(ii)] $\omega(T^*T^D) = \|T\|^2$.
\end{itemize}
\end{corollary}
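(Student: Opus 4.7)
The plan is to specialize Theorem~\ref{T.11003} to $\lambda = 1/2$ and use Proposition~\ref{P.1003} to absorb the maxima. First, I would note that the case $T=0$ is trivial, since then $T^D = 0$, $\widehat{T}=0$, and $T^*T^D=0$, so both (i) and (ii) reduce to $0=0$. Assume therefore $T\neq 0$.

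Next, I would recall from Definition~\ref{D.0002} that $M_{1/2}(T) = \widehat{T}$, and then substitute $\lambda = 1/2$ into both conditions of Theorem~\ref{T.11003}. Condition (i) of that theorem becomes
\begin{align*}
\|\widehat{T}\| \;=\; 2\max\!\left\{\tfrac{1}{2}\|T\|,\;\tfrac{1}{2}\|T^D\|\right\} \;=\; \max\{\|T\|,\,\|T^D\|\},
\end{align*}
while condition (ii) becomes
\begin{align*}
\omega(T^*T^D) \;=\; \frac{1}{1/4}\,\max\!\left\{\tfrac{1}{4}\|T\|^2,\;\tfrac{1}{4}\|T^D\|^2\right\} \;=\; \max\{\|T\|^2,\,\|T^D\|^2\}.
\end{align*}

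At this point the key step is to invoke Proposition~\ref{P.1003}, which gives $\|T^D\|\le\|T\|$. Consequently $\max\{\|T\|,\|T^D\|\} = \|T\|$ and $\max\{\|T\|^2,\|T^D\|^2\} = \|T\|^2$, so the two specialized conditions simplify exactly to (i) and (ii) of Corollary~\ref{C.12003}. The equivalence then follows directly from Theorem~\ref{T.11003}.

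There is no real obstacle here; the proof is a clean specialization. The only minor point to watch is that Theorem~\ref{T.11003} is stated for $\lambda \in (0,1)$ and our value $\lambda = 1/2$ lies in that range, so the theorem applies without modification. Everything else reduces to elementary arithmetic and the norm comparison $\|T^D\|\le\|T\|$.
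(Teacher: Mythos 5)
Your proof is correct and is exactly the route the paper intends: the corollary is stated there as an immediate consequence of Theorem~\ref{T.11003} (specialized to $\lambda = 1/2$, where $M_{1/2}(T) = \widehat{T}$) together with the bound $\|T^D\| \leq \|T\|$ from Proposition~\ref{P.1003}, which collapses both maxima to $\|T\|$ and $\|T\|^2$ respectively. Your separate treatment of $T = 0$ is harmless but unnecessary, since Theorem~\ref{T.11003} applies to all $T \in \mathbb{B}(\mathcal{H})$.
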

\section{Bounds for the numerical radius of the $\lambda$-mean transform}
In this section, inspired by the numerical radius inequalities of bounded linear operators
in \cite{A.K.1, H.K.S.2, M.X.Z}, we state several numerical radius
inequalities for the $\lambda$-mean transform of Hilbert space operators.
Our first result reads as follows.
\begin{theorem}\label{T.1004}
Let $T\in\mathbb{B}(\mathcal{H})$ and let $\lambda \in [0, 1]$.
Then
\begin{align}\label{T.1004.1}
2\sqrt{\lambda - \lambda^2}\,\omega(\widetilde{T})\leq \omega\big(M_{\lambda}(T)\big) \leq \lambda\omega(T) + (1-\lambda)\omega(T^D).
\end{align}
In particular,
\begin{align}\label{T.1004.2}
2\sqrt{\lambda - \lambda^2}\,r(T)\leq \omega\big(M_{\lambda}(T)\big) \leq \omega(T).
\end{align}
\end{theorem}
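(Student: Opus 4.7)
The plan is to mirror, step by step, the proof of Theorem~\ref{T.2003}, replacing the operator norm by the numerical radius throughout.

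For the upper bound in (\ref{T.1004.1}), I would use the fact recalled after (\ref{1.1}) that $\omega(\cdot)$ is a norm on $\mathbb{B}(\mathcal{H})$. The triangle inequality applied to $M_{\lambda}(T)=\lambda T+(1-\lambda)T^{D}$ then yields
\begin{align*}
\omega\bigl(M_{\lambda}(T)\bigr)\leq\lambda\,\omega(T)+(1-\lambda)\,\omega(T^{D}).
\end{align*}

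For the lower bound in (\ref{T.1004.1}), I would invoke the numerical-radius version of the Heinz inequality, namely that for positive $A,B\in\mathbb{B}(\mathcal{H})$ and any $X\in\mathbb{B}(\mathcal{H})$,
\begin{align*}
2\,\omega\bigl(A^{1/2}XB^{1/2}\bigr)\leq\omega(AX+XB).
\end{align*}
Writing $T=U|T|$ and applying this with $A=(1-\lambda)|T|$, $B=\lambda|T|$, $X=U$, the left-hand side becomes $2\sqrt{\lambda-\lambda^{2}}\,\omega\bigl(|T|^{1/2}U|T|^{1/2}\bigr)=2\sqrt{\lambda-\lambda^{2}}\,\omega(\widetilde{T})$, while $AX+XB=(1-\lambda)|T|U+\lambda U|T|=M_{\lambda}(T)$, which is exactly the claim.

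The specialization (\ref{T.1004.2}) follows at once. For the lower bound, I combine the lower bound of (\ref{T.1004.1}) with the chain $r(T)=r(\widetilde{T})\leq\omega(\widetilde{T})$, where the equality is the standard spectral identity for the Aluthge transform and the inequality is from (\ref{1.1}). For the upper bound $\omega(M_{\lambda}(T))\leq\omega(T)$, I would combine the upper bound of (\ref{T.1004.1}) with a numerical-radius analog of Proposition~\ref{P.1003}, namely $\omega(T^{D})\leq\omega(T)$: when $|T|$ is invertible, $U$ is an isometry and $T^{D}=U^{*}TU$, so $|\langle T^{D}x,x\rangle|=|\langle T(Ux),Ux\rangle|\leq\omega(T)$ for every unit $x$; the general case follows by the approximation argument of Proposition~\ref{P.1003} together with the operator-norm continuity $|\omega(A)-\omega(B)|\leq\|A-B\|$.

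The main obstacle is establishing the numerical-radius Heinz inequality used in the second step. Since $\omega(\cdot)$ is only weakly unitarily invariant and not unitarily invariant, the classical Heinz inequality for unitarily invariant norms does not transfer directly; the proof proceeds via the supremum formula (\ref{1.2}), reducing matters to an operator-norm inequality for $\mathfrak{Re}\bigl(e^{i\theta}(A^{1/2}XB^{1/2})\bigr)$ that can in turn be controlled by the usual operator-norm Heinz inequality. This is the key technical input that distinguishes this theorem from its operator-norm counterpart Theorem~\ref{T.2003}.
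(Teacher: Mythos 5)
Your argument for (\ref{T.1004.1}) is structurally identical to the paper's: the upper bound comes from the triangle inequality for the norm $\omega(\cdot)$, and the lower bound from applying $2\,\omega\bigl(A^{1/2}XB^{1/2}\bigr)\leq\omega(AX+XB)$ with $A=(1-\lambda)|T|$, $B=\lambda|T|$, $X=U$; the lower bound in (\ref{T.1004.2}) likewise uses $r(T)=r(\widetilde{T})\leq\omega(\widetilde{T})$. The one place you genuinely diverge is the inequality $\omega(T^D)\leq\omega(T)$ needed for the upper bound in (\ref{T.1004.2}): the paper deduces it from Proposition \ref{P.1003} via the Stampfli--Williams formula $\overline{W(A)}=\bigcap_{\gamma\in\mathbb{C}}\bigl\{\xi:\,|\xi-\gamma|\leq\|A-\gamma I\|\bigr\}$, which yields the stronger inclusion $\overline{W(T^D)}\subseteq\overline{W(T)}$, whereas you argue directly that $T^D=U^*TU$ with $U$ an isometry when $|T|$ is invertible, and then approximate. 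Your invertible case is correct and more elementary; be aware, though, that the approximation step needs the same care as Proposition \ref{P.1003} itself: if $T$ is not injective, $U$ is not an isometry, and $UT_n$ is not the canonical polar decomposition of $X_n=UT_n$ (indeed $|X_n|=(T_nU^*UT_n)^{1/2}\neq T_n$), so your invertible case does not apply verbatim to the approximants.

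The genuine gap is in what you call the main obstacle. The inequality $\omega\bigl(A^{1/2}XB^{1/2}\bigr)\leq\omega\bigl(\frac{AX+XB}{2}\bigr)$ is not proved in the paper either: it is quoted from \cite{S.M.Y}, so the clean fix is simply to cite that result. Your sketched derivation of it does not close as written. Applying (\ref{1.2}) to both sides, what is required is the pointwise-in-$\theta$ estimate
\begin{align*}
\Bigl\|\mathfrak{Re}\bigl(A^{1/2}(e^{i\theta}X)B^{1/2}\bigr)\Bigr\|
\leq\frac12\,\Bigl\|\mathfrak{Re}\bigl(A(e^{i\theta}X)+(e^{i\theta}X)B\bigr)\Bigr\|,
\end{align*}
a ``real-part Heinz inequality''. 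The usual operator-norm Heinz inequality only controls the full operator, giving
$\bigl\|\mathfrak{Re}\bigl(e^{i\theta}A^{1/2}XB^{1/2}\bigr)\bigr\|\leq\bigl\|A^{1/2}XB^{1/2}\bigr\|\leq\frac12\|AX+XB\|$,
and since $\omega(AX+XB)\leq\|AX+XB\|$ can be strict, this proves only the weaker statement $\omega\bigl(A^{1/2}XB^{1/2}\bigr)\leq\frac12\|AX+XB\|$, not the bound by $\frac12\,\omega(AX+XB)$ that your second step requires. So either quote \cite{S.M.Y}, as the paper does, or supply a proof of the real-part version, which is a strictly stronger assertion than the classical Heinz inequality and is precisely the content of the cited result.
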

\begin{proof}
Let $T=U|T|$ be the canonical polar decomposition of $T$.
It has been indicated in \cite{S.M.Y} that, for positive operators $A, B \in\mathbb{B}(\mathcal{H})$
and any $X\in\mathbb{B}(\mathcal{H})$,
\begin{align}\label{T.1004.3}
\omega\big(A^{1/2}XB^{1/2}\big)\leq \omega\left(\frac{AX + XB}{2}\right).
\end{align}
By letting $A=(1-\lambda)|T|, B= \lambda |T|$ and $X = U$ in the inequality (\ref{T.1004.3}),
it follows that
\begin{align*}
\omega\Big(\sqrt{1-\lambda}|T|^{1/2}U\sqrt{\lambda}|T|^{1/2}\Big)\leq \omega\left(\frac{(1-\lambda)|T|U + U\lambda|T|}{2}\right).
\end{align*}
Therefore,
\begin{align*}
\sqrt{\lambda - \lambda^2}\omega\big(|T|^{1/2}U|T|^{1/2}\big)\leq \frac{1}{2}\omega\big((1-\lambda)T^D + \lambda T\big),
\end{align*}
and hence
\begin{align}\label{4.P.1.004}
2\sqrt{\lambda - \lambda^2}\omega(\widetilde{T})\leq \omega\big(M_{\lambda}(T)\big).
\end{align}
In addition, since $M_{\lambda}(T)=\lambda T + (1-\lambda)T^D$ and $\omega(\cdot)$ is a norm on $\mathbb{B}(\mathcal{H})$, we have
\begin{align}\label{4.P.1.005}
\omega\big(M_{\lambda}(T)\big) \leq \lambda\omega(T) + (1-\lambda)\omega(T^D).
\end{align}
Now, the inequalities (\ref{T.1004.1}) follow from the inequalities (\ref{4.P.1.004}) and (\ref{4.P.1.005}).

Also, for any $A\in\mathbb{B}(\mathcal{H})$, it is known (see \cite{St.Wi}) that
\begin{align*}
\overline{W(A)} = \displaystyle{\bigcap_{\gamma \in\mathbb{C}}}\big\{\xi: \, |\xi - \gamma| \leq \|A - \gamma I\|\big\}.
\end{align*}
So, by Proposition \ref{P.1003}, we have
\begin{align*}
\overline{W(T^D)} &= \displaystyle{\bigcap_{\gamma \in\mathbb{C}}}\big\{\xi: \, |\xi - \gamma| \leq \|T^D - \gamma I\|\big\}
\\& \subseteq \displaystyle{\bigcap_{\gamma \in\mathbb{C}}}\big\{\xi: \, |\xi - \gamma| \leq \|T - \gamma I\|\big\} = \overline{W(T)},
\end{align*}
whence $\overline{W(T^D)} \subseteq \overline{W(T)}$.
Thus $\omega(T^D) \leq \omega(T)$.
Further, by $r(T) = r(\widetilde{T})$ and the first inequality in (\ref{1.1}), we get
$r(T)\leq \omega(\widetilde{T})$.
Therefore, we deduce the inequalities (\ref{T.1004.2}).
\end{proof}
\begin{remark}\label{R.2004}
Let $T\in\mathbb{B}(\mathcal{H})$.
If $\omega(T) = 0$, then by (\ref{T.1004.2}) we have $\omega\big(M_{\lambda}(T)\big) = 0$.
Now, let $\omega\big(M_{\lambda}(T)\big) = 0$ and let $\lambda \in (0, 1]$. Since, by (\ref{1.1}), we have
$\frac{1}{2}\big\|M_{\lambda}(T)\big\|\leq \omega\big(M_{\lambda}(T)\big)$,
therefore we reach that $\big\|M_{\lambda}(T)\big\|= 0$.
So, by Proposition \ref{P.2003}, it follows that $\|T\| = 0$.
Again by (\ref{1.1}), we obtain $\omega(T) = 0$.
\end{remark}
\begin{remark}\label{R.3004}
Let $T\in\mathbb{B}(\mathcal{H})$ and let $\lambda \in [0, 1]$.
By the triangle inequality and Proposition \ref{P.1003}, we have
\begin{align*}
\overline{W\big(M_{\lambda}(T)\big)}
&= \displaystyle{\bigcap_{\gamma \in\mathbb{C}}}\Big\{\xi: \, |\xi - \gamma| \leq \|M_{\lambda}(T) - \gamma I\|\Big\}
\\& = \displaystyle{\bigcap_{\gamma \in\mathbb{C}}}\Big\{\xi: \, |\xi - \gamma|
\leq \Big\|\lambda (T -\gamma I) + (1-\lambda)(T^D - \gamma I)\Big\|\Big\}
\\& \subseteq \displaystyle{\bigcap_{\gamma \in\mathbb{C}}}\Big\{\xi: \, |\xi - \gamma|
\leq \lambda \|T - \gamma I\| + (1-\lambda)\|T^D - \gamma I\|\Big\}
\\& \subseteq \displaystyle{\bigcap_{\gamma \in\mathbb{C}}}\Big\{\xi: \, |\xi - \gamma|
\leq \lambda \|T -\gamma I\| + (1-\lambda)\|T - \gamma I\|\Big\}
\\& = \displaystyle{\bigcap_{\gamma \in\mathbb{C}}}\Big\{\xi: \, |\xi - \gamma|
\leq \|T -\gamma I\|\Big\}
= \overline{W(T)},
\end{align*}
and hence
\begin{align*}
\overline{W\big(M_{\lambda}(T)\big)} \subseteq \overline{W(T)}.
\end{align*}
\end{remark}
\begin{remark}\label{R.4004}
Let $T\in\mathbb{B}(\mathcal{H})$ and let $\lambda \in (0, 1]$.
For $\zeta \in \mathbb{C}$, if $T = \zeta I$, then it is obvious that
$W\big(M_{\lambda}(T)\big) = \{\zeta\}$.
Let us now suppose $W\big(M_{\lambda}(T)\big) = \{\zeta\}$ and let $T=U|T|$ be the canonical polar decomposition of $T$.
For $A\in\mathbb{B}(\mathcal{H})$,
it is well known that $W(A) = \{\zeta\}$ if and only if $A = \zeta I$ (see \cite{Halmos}).
Therefore, $M_{\lambda}(T) = \zeta I$, or equivalently, $\lambda U|T| + (1-\lambda)|T|U = \zeta I$.
From this it follows that 
\begin{align*}
\zeta U^* = \lambda U^*U|T| + (1-\lambda)U^*|T|U = \lambda|T| + (1-\lambda)U^*|T|U.
\end{align*}
Taking $*$-operation yields
$\overline{\zeta} U = \lambda |T| + (1-\lambda)U^*|T|U$. Hence $\overline{\zeta} U$ is an orthogonal projection.
On the other hand, since $\mathcal{N}(T) = \mathcal{N}(U)$, we have $\mathcal{N}(T) \subseteq \mathcal{N}\big(M_{\lambda}(T)\big) = \{0\}$
and so $T$ is one-to-one. Thus $\overline{\zeta} U$ is an isometry, so
$\overline{\zeta} U = I$. Therefore,
$\overline{\zeta}T = \overline{\zeta}U|T| = |T|$
and $|T| = |\zeta|^2 I$. This implies $T = \zeta I$.
\end{remark}
In the following result we obtain an improvement of the second inequality in (\ref{T.1004.1}).
\begin{theorem}\label{T.5004}
Let $T\in\mathbb{B}(\mathcal{H})$ and let $\lambda \in [0, 1]$. Then
\begingroup\makeatletter\def\f@size{8}\check@mathfonts
\begin{align*}
\omega\big(M_{\lambda}(T)\big) \leq 2\int_0^1\omega\Big(\lambda t T + (1-\lambda)(1-t)T^D\Big)dt
\leq \frac{1}{2}\Big(\lambda\omega(T) + (1 - \lambda)\omega(T^D) + \omega\big(M_{\lambda}(T)\big)\Big).
\end{align*}
\endgroup
In particular,
\begingroup\makeatletter\def\f@size{10}\check@mathfonts
\begin{align*}
\omega(\widehat{T})\leq \int_0^1\omega\big(M_{t}(T)\big)dt
\leq \frac{1}{4}\Big(\omega(T) + \omega(T^D) + 2\omega(\widehat{T})\Big)
\leq \omega(T).
\end{align*}
\endgroup
\end{theorem}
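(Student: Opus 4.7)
The plan is to exploit the simple identity that the integrand $f(t) := \lambda t\, T + (1-\lambda)(1-t)\, T^D$ is an affine function of $t$ satisfying
$$f(0) = (1-\lambda)T^D, \quad f(1) = \lambda T, \quad f(1/2) = \tfrac{1}{2}M_{\lambda}(T), \quad \int_0^1 f(t)\,dt = \tfrac{1}{2}M_{\lambda}(T).$$
The last two identities are the crucial observation; once they are in place, both claimed inequalities fall out of standard convex-analytic tools applied to the scalar function $g(t) := \omega(f(t))$, which is convex on $[0,1]$ because $\omega(\cdot)$ is a seminorm on $\mathbb{B}(\mathcal{H})$ and $f$ is affine.

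For the first inequality $\omega\bigl(M_{\lambda}(T)\bigr) \leq 2\int_0^1\omega(f(t))\,dt$, I would simply apply the integral form of the triangle inequality for the seminorm $\omega(\cdot)$:
$$\omega\!\left(\int_0^1 f(t)\,dt\right) \leq \int_0^1 \omega(f(t))\,dt,$$
and then substitute $\int_0^1 f(t)\,dt = \tfrac{1}{2}M_{\lambda}(T)$.

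For the second inequality I would invoke the Hermite--Hadamard upper bound for the convex function $g$ on each of the two halves of $[0,1]$:
$$\int_0^{1/2} g(t)\,dt \leq \tfrac{1}{4}\bigl(g(0) + g(1/2)\bigr), \qquad \int_{1/2}^{1} g(t)\,dt \leq \tfrac{1}{4}\bigl(g(1/2) + g(1)\bigr).$$
Adding these and inserting the values $g(0) = (1-\lambda)\omega(T^D)$, $g(1) = \lambda\omega(T)$, $g(1/2) = \tfrac{1}{2}\omega(M_{\lambda}(T))$ yields
$$\int_0^1 \omega(f(t))\,dt \leq \tfrac{1}{4}\Bigl(\lambda\omega(T) + (1-\lambda)\omega(T^D) + \omega(M_{\lambda}(T))\Bigr),$$
which is the second inequality after multiplication by $2$.

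For the ``in particular'' assertion, setting $\lambda = 1/2$ gives $f(t) = \tfrac{1}{2}\bigl(tT + (1-t)T^D\bigr) = \tfrac{1}{2}M_t(T)$, so $2\int_0^1\omega(f(t))\,dt = \int_0^1 \omega(M_t(T))\,dt$, and the displayed chain becomes
$$\omega(\widehat{T}) \leq \int_0^1 \omega(M_t(T))\,dt \leq \tfrac{1}{4}\bigl(\omega(T) + \omega(T^D) + 2\omega(\widehat{T})\bigr).$$
The remaining inequality $\tfrac{1}{4}\bigl(\omega(T) + \omega(T^D) + 2\omega(\widehat{T})\bigr) \leq \omega(T)$ follows from $\omega(T^D) \leq \omega(T)$ (established in Remark \ref{R.3004}) together with $\omega(\widehat{T}) \leq \omega(T)$, which is the $\lambda = 1/2$ case of the second inequality in (\ref{T.1004.2}). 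I do not foresee any real obstacle: the only subtlety is recognizing the coincidence $f(1/2) = \int_0^1 f(t)\,dt = \tfrac{1}{2}M_{\lambda}(T)$, which makes the Hermite--Hadamard machinery produce exactly the right constants.
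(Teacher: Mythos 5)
Your proof is correct, and its skeleton coincides with the paper's: both arguments hinge on the convexity of $g(t)=\omega\big(\lambda t T+(1-\lambda)(1-t)T^D\big)$, whose values at $t=0$, $t=1$, and $t=1/2$ are $(1-\lambda)\omega(T^D)$, $\lambda\omega(T)$, and $\tfrac12\omega\big(M_{\lambda}(T)\big)$, respectively. The differences lie in which convexity tools you invoke. For the lower bound, the paper applies the left-hand (midpoint) Hermite--Hadamard inequality $g(1/2)\le\int_0^1 g(t)\,dt$, whereas you use the triangle inequality for the operator-valued integral together with the computation $\int_0^1 f(t)\,dt=\tfrac12 M_{\lambda}(T)$; since $f$ is affine, $\int_0^1 f(t)\,dt=f(1/2)$, so these are the same estimate in two guises. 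For the upper bound, the paper cites the Hammer--Bullen inequality from \cite{Ni.Pe}, namely $2\int_0^1 g(t)\,dt\le\tfrac{1}{2}\big(g(0)+g(1)\big)+g(1/2)$, while you re-derive exactly this bound by applying the trapezoidal (right-hand Hermite--Hadamard) estimate on each of the halves $[0,1/2]$ and $[1/2,1]$ and adding; this makes your argument self-contained at essentially no extra cost, which is a small but genuine improvement over quoting the literature. Your treatment of the ``in particular'' chain is also sound, and the internal references you lean on are legitimate: Remark \ref{R.3004} (specialized to $\lambda=0$, since $M_{0}(T)=T^D$) gives $\omega(T^D)\le\omega(T)$, and the $\lambda=1/2$ case of (\ref{T.1004.2}) gives $\omega(\widehat{T})\le\omega(T)$, which together yield the final inequality $\tfrac14\big(\omega(T)+\omega(T^D)+2\omega(\widehat{T})\big)\le\omega(T)$.
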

\begin{proof}
Let $f(t) : = \omega\big(t\lambda T + (1 - t)(1-\lambda)T^D\big)$ for $t\in[0, 1]$.
It is easy to see that the function $f$ is convex,
and so we have
\begin{align*}
f\left(\frac{0+1}{2}\right) \leq \frac{1}{1-0}\int_0^1 f(t)dt.
\end{align*}
Therefore
\begin{align*}
\omega\left(\frac{1}{2}\lambda T + \frac{1}{2}(1-\lambda)T^D\right)
\leq \int_0^1 \omega\Big(t\lambda T + (1 - t)(1-\lambda)T^D\Big)dt,
\end{align*}
and hence
\begin{align}\label{T.5004.1}
\omega\big(M_{\lambda}(T)\big) \leq 2\int_0^1\omega\Big(\lambda t T + (1-\lambda)(1-t)T^D\Big)dt.
\end{align}
On the other hand, by the Hammer-Bullen inequality (see, e.g., \cite{Ni.Pe}), we have
\begin{align*}
\frac{2}{1-0}\int_0^1 f(t)dt \leq \frac{f(0) + f(1)}{2} + f\left(\frac{0+1}{2}\right).
\end{align*}
Thus
\begingroup\makeatletter\def\f@size{8}\check@mathfonts
\begin{align*}
2\int_0^1 \omega\Big(t\lambda T + (1 - t)(1-\lambda)T^D\Big)dt
\leq \frac{(1-\lambda)\omega(T^D) + \lambda \omega(T)}{2} + \omega\left(\frac{1}{2}\lambda T + \frac{1}{2}(1-\lambda)T^D\right),
\end{align*}
\endgroup
whence
\begingroup\makeatletter\def\f@size{10}\check@mathfonts
\begin{align}\label{T.5004.2}
2\int_0^1\omega\Big(\lambda t T + (1-\lambda)(1-t)T^D\Big)dt
\leq \frac{1}{2}\Big(\lambda\omega(T) + (1 - \lambda)\omega(T^D) + \omega\big(M_{\lambda}(T)\big)\Big).
\end{align}
\endgroup
From (\ref{T.5004.1}) and (\ref{T.5004.2}), we deduce the desired result.
\end{proof}
\begin{remark}\label{R.6004}
Let $T = \begin{bmatrix}
0 & 1 \\
0 & 1
\end{bmatrix}$.
By Example \ref{E.5003}, we have
$T^D = \begin{bmatrix}
0 & 0 \\
0 & 1
\end{bmatrix}$. Hence
$\widehat{T} = \begin{bmatrix}
0 & 1/2 \\
0 & 1
\end{bmatrix}$.
It is easy to see that
$\omega(T) = \frac{1+\sqrt{2}}{2}, \omega(T^D) = 1, \omega(\widehat{T}) = \frac{2+\sqrt{5}}{4}$,
and
\begin{align*}
\int_0^1 \omega\big(M_{t}(T)\big)dt = \int_0^1 \frac{1+\sqrt{1+t^2}}{2}dt \simeq 1.0739.
\end{align*}
Thus
\begingroup\makeatletter\def\f@size{10}\check@mathfonts
\begin{align*}
\omega(\widehat{T})\simeq 1.0590 &< \int_0^1\omega\big(M_{t}(T)\big)dt \simeq 1.0739
\\&< \frac{1}{4}\Big(\omega(T) + \omega(T^D) + 2\omega(\widehat{T})\Big) \simeq 1.0812
\\&< \frac{1}{2}\big(\omega(T) + \omega(T^D)\big) \simeq 1.1035 < \omega(T)\simeq 1.2071.
\end{align*}
\endgroup
Therefore, the inequalities in Theorem \ref{T.5004} are nontrivial improvements.
\end{remark}
Next, we present another improvement of the second inequality in (\ref{T.1004.1}).
\begin{theorem}\label{T.7004}
Let $T\in\mathbb{B}(\mathcal{H})$ and let $\lambda \in [0, 1]$. Then
\begingroup\makeatletter\def\f@size{10}\check@mathfonts
\begin{align*}
\omega\big(M_{\lambda}(T)\big) &\leq \frac{1}{2} \big(\lambda\omega(T) + (1 - \lambda)\omega(T^D)\big)
\\& \qquad + \frac{1}{2}\sqrt{\big(\lambda\omega(T) - (1 - \lambda)\omega(T^D)\big)^2 +
4(\lambda - \lambda^2) \sup_{\theta \in \mathbb{R}}\big\|\mathfrak{Re}(e^{i\theta}T)\mathfrak{Re}(e^{i\theta} T^D)\big\|}
\\& \leq \lambda\omega(T) + (1 - \lambda)\omega(T^D).
\end{align*}
\endgroup
In particular,
\begingroup\makeatletter\def\f@size{8}\check@mathfonts
\begin{align*}
\omega(\widehat{T}) \leq \frac{\omega(T) + \omega(T^D)
+ \sqrt{\big(\omega(T) - \omega(T^D)\big)^2 +
4\sup_{\theta \in \mathbb{R}}\big\|\mathfrak{Re}(e^{i\theta}T)\mathfrak{Re}(e^{i\theta} T^D)\big\|}}{4}
\leq \omega(T).
\end{align*}
\endgroup
\end{theorem}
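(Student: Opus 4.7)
The plan is to pass from the numerical radius to an operator norm of the real part via (\ref{1.2}), then invoke a Kittaneh-style sharpening of the triangle inequality for a linear combination of two self-adjoint operators, and finally take the supremum over the rotational parameter $\theta$. Concretely, by (\ref{1.2}) one has $\omega\big(M_{\lambda}(T)\big) = \sup_{\theta\in\mathbb{R}}\|\mathfrak{Re}(e^{i\theta}M_{\lambda}(T))\|$, and since $M_{\lambda}(T) = \lambda T + (1-\lambda)T^D$, the operator inside the norm decomposes linearly as $\lambda X_\theta + (1-\lambda)Y_\theta$ with $X_\theta := \mathfrak{Re}(e^{i\theta}T)$ and $Y_\theta := \mathfrak{Re}(e^{i\theta}T^D)$, both self-adjoint.

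The heart of the proof is the following Kittaneh-type inequality: for self-adjoint $X, Y\in\mathbb{B}(\mathcal{H})$ and scalars $p, q \geq 0$,
\begin{align*}
\|pX + qY\| \leq \tfrac{1}{2}\big(p\|X\|+q\|Y\|\big) + \tfrac{1}{2}\sqrt{\big(p\|X\|-q\|Y\|\big)^2 + 4pq\|XY\|},
\end{align*}
which can be extracted (or adapted) from the references \cite{A.K.1, H.K.S.2, M.X.Z}. Applying this with $(X, Y, p, q) = (X_\theta, Y_\theta, \lambda, 1-\lambda)$ and using $\|X_\theta\| \leq \omega(T)$, $\|Y_\theta\| \leq \omega(T^D)$ (both immediate from (\ref{1.2})), together with the elementary fact that the right-hand side above is nondecreasing in $\|X\|$ and $\|Y\|$, one obtains the stated bound pointwise in $\theta$; taking $\sup_\theta$ then produces the first displayed inequality.

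For the second displayed inequality, I would combine $\|X_\theta Y_\theta\| \leq \|X_\theta\|\|Y_\theta\| \leq \omega(T)\omega(T^D)$ with the algebraic identity $(\lambda\omega(T)-(1-\lambda)\omega(T^D))^2 + 4\lambda(1-\lambda)\omega(T)\omega(T^D) = (\lambda\omega(T) + (1-\lambda)\omega(T^D))^2$, so that the radical collapses to $\lambda\omega(T) + (1-\lambda)\omega(T^D)$. The ``in particular'' assertion follows by specialising to $\lambda = 1/2$, and the final estimate $\omega(\widehat{T}) \leq \omega(T)$ uses $\omega(T^D) \leq \omega(T)$ from Remark \ref{R.3004}. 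The main technical obstacle is verifying the self-adjoint operator inequality above: it is a genuine refinement of the triangle inequality (since $\|XY\|$ replaces the cruder $\|X\|\|Y\|$), and its proof typically passes through a $2\times 2$ block-positivity argument in the style of Kittaneh; care is also needed so that the monotonicity step above does not waste this sharpening.
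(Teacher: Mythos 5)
Your proposal is correct and is essentially the paper's own proof: the paper simply quotes \cite[Theorem 3.4]{A.K.1}, namely $\omega(A+B)\le \tfrac{1}{2}\big(\omega(A)+\omega(B)\big)+\tfrac{1}{2}\sqrt{\big(\omega(A)-\omega(B)\big)^2+4\sup_{\theta\in\mathbb{R}}\big\|\mathfrak{Re}(e^{i\theta}A)\mathfrak{Re}(e^{i\theta}B)\big\|}$, applies it with $A=\lambda T$ and $B=(1-\lambda)T^D$, and then collapses the radical exactly as you do, via $\sup_{\theta}\big\|\mathfrak{Re}(e^{i\theta}T)\mathfrak{Re}(e^{i\theta}T^D)\big\|\le\omega(T)\omega(T^D)$ together with the identity $(a-b)^2+4ab=(a+b)^2$, and finally specializes to $\lambda=1/2$ using $\omega(T^D)\le\omega(T)$. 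The only difference is packaging: the weighted self-adjoint norm lemma, the reduction through (\ref{1.2}), and the monotonicity step that you single out as the main technical obstacle constitute precisely the statement and internal proof of the cited Abu-Omar--Kittaneh theorem, which the paper invokes as a black box rather than re-deriving.
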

\begin{proof}
For every $A, B\in\mathbb{B}(\mathcal{H})$, it is proved in \cite[Theorem 3.4]{A.K.1} that
\begingroup\makeatletter\def\f@size{10}\check@mathfonts
\begin{align*}
\omega(A + B) \leq \frac{1}{2} \big(\omega(A) + \omega(B)\big)
+ \frac{1}{2}\sqrt{\big(\omega(A) - \omega(B)\big)^2 +
4\sup_{\theta \in \mathbb{R}}\big\|\mathfrak{Re}(e^{i\theta}A)\mathfrak{Re}(e^{i\theta} B)\big\|}
\end{align*}
\endgroup
Applying the above inequality with $A = \lambda T$ and $B = (1-\lambda)T^D$, we get
\begingroup\makeatletter\def\f@size{10}\check@mathfonts
\begin{align*}
\omega\big(M_{\lambda}(T)\big) &\leq \frac{1}{2} \big(\lambda\omega(T) + (1 - \lambda)\omega(T^D)\big)
\\& \qquad + \frac{1}{2}\sqrt{\big(\lambda\omega(T) - (1 - \lambda)\omega(T^D)\big)^2 +
4(\lambda - \lambda^2) \sup_{\theta \in \mathbb{R}}\big\|\mathfrak{Re}(e^{i\theta}T)\mathfrak{Re}(e^{i\theta} T^D)\big\|}.
\end{align*}
\endgroup
Also, by (\ref{1.2}), we have
\begingroup\makeatletter\def\f@size{10}\check@mathfonts
\begin{align*}
\frac{1}{2}&\sqrt{\big(\lambda\omega(T) - (1 - \lambda)\omega(T^D)\big)^2 +
4(\lambda - \lambda^2) \sup_{\theta \in \mathbb{R}}\big\|\mathfrak{Re}(e^{i\theta}T)\mathfrak{Re}(e^{i\theta} T^D)\big\|}
\\& \qquad \leq \frac{1}{2}\sqrt{\big(\lambda\omega(T) - (1 - \lambda)\omega(T^D)\big)^2 +
4(\lambda - \lambda^2) \sup_{\theta \in \mathbb{R}}\big\|\mathfrak{Re}(e^{i\theta}T)\big\|
\sup_{\theta \in \mathbb{R}}\big\|\mathfrak{Re}(e^{i\theta} T^D)\big\|}
\\& \qquad \leq \frac{1}{2}\sqrt{\big(\lambda\omega(T) - (1 - \lambda)\omega(T^D)\big)^2 +
4(\lambda - \lambda^2)\omega(T)\omega(T^D)}
\\& \qquad = \frac{1}{2} \big(\lambda\omega(T) + (1 - \lambda)\omega(T^D)\big).
\end{align*}
\endgroup
Now, we deduce the desired result.
\end{proof}
\begin{remark}\label{R.8004}
The inequality obtained by us in Theorem \ref{T.7004} is a nontrivial improvement.
Consider $T = \begin{bmatrix}
0 & 1 \\
0 & 1
\end{bmatrix}$. By Remark \ref{R.6004}, we have
$T^D = \begin{bmatrix}
0 & 0 \\
0 & 1
\end{bmatrix}$,
$\omega(T) = \frac{1+\sqrt{2}}{2}, \omega(T^D) = 1$ and $\omega(\widehat{T}) = \frac{2+\sqrt{5}}{4}$.
Also, it is easy to check that
\begin{align*}
\sup_{\theta \in \mathbb{R}}\big\|\mathfrak{Re}(e^{i\theta}T)\mathfrak{Re}(e^{i\theta} T^D)\big\|
&= \frac{1}{4}\left\|\begin{bmatrix}
0 & 1+e^{2i\theta} \\
0 & 2+e^{2i\theta}+e^{-2i\theta}
\end{bmatrix}\right\|
\\& = \frac{1}{4}\sup_{\theta \in \mathbb{R}}\sqrt{6+10\cos(2\theta) + 4\cos^2(2\theta)}
= \frac{\sqrt{5}}{2}.
\end{align*}
Therefore,
\begingroup\makeatletter\def\f@size{8}\check@mathfonts
\begin{align*}
\omega(\widehat{T})\simeq 1.0590 &\leq \frac{\omega(T) + \omega(T^D)
+ \sqrt{\big(\omega(T) - \omega(T^D)\big)^2 +
4\sup_{\theta \in \mathbb{R}}\big\|\mathfrak{Re}(e^{i\theta}T)\mathfrak{Re}(e^{i\theta} T^D)\big\|}}{4}\simeq 1.0829
\\&< \frac{1}{2}\big(\omega(T) + \omega(T^D)\big) \simeq 1.1035 < \omega(T)\simeq 1.2071.
\end{align*}
\endgroup
\end{remark}
The following result may be stated as well.
\begin{proposition}\label{P.9004}
Let $T\in\mathbb{B}(\mathcal{H})$ and let $\lambda \in [0, 1]$.
For every unitary operator $V\in\mathbb{B}(\mathcal{H})$ and every $n\in\mathbb{Z}_{+}$,
\begin{align*}
\omega\Big(M^{(n)}_{\lambda}(VTV^*)\Big) = \omega\big(M^{(n)}_{\lambda}(T)\big).
\end{align*}
\end{proposition}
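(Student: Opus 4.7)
The plan is to show that conjugation by a unitary commutes with the $\lambda$-mean transform, so that $M^{(n)}_{\lambda}(VTV^*) = V M^{(n)}_{\lambda}(T) V^*$, and then invoke the weak unitary invariance of the numerical radius recalled in the introduction.

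First, I would establish the key lemma that $M_{\lambda}(VTV^*) = V M_{\lambda}(T) V^*$ for any unitary $V$. Let $T = U|T|$ be the canonical polar decomposition. Because $V$ is unitary, $V|T|V^* = V(T^*T)^{1/2}V^* = \big((VTV^*)^*(VTV^*)\big)^{1/2} = |VTV^*|$, and $VUV^*$ is a partial isometry with $\mathcal{N}(VUV^*) = V\mathcal{N}(U) = V\mathcal{N}(T) = \mathcal{N}(VTV^*)$. By uniqueness of the canonical polar decomposition, $VTV^* = (VUV^*)\,|VTV^*|$. Consequently,
\begin{align*}
(VTV^*)^D = |VTV^*|(VUV^*) = V|T|V^* \cdot VUV^* = V|T|UV^* = V T^D V^*.
\end{align*}
Linearity of conjugation by $V$ then yields
\begin{align*}
M_{\lambda}(VTV^*) = \lambda\, VTV^* + (1-\lambda)\, V T^D V^* = V\big(\lambda T + (1-\lambda)T^D\big)V^* = V M_{\lambda}(T) V^*.
\end{align*}

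Next, I would iterate this identity. A straightforward induction on $n$ gives $M^{(n)}_{\lambda}(VTV^*) = V M^{(n)}_{\lambda}(T) V^*$: the base case $n=0$ is trivial, and for the inductive step,
\begin{align*}
M^{(n+1)}_{\lambda}(VTV^*) = M_{\lambda}\big(M^{(n)}_{\lambda}(VTV^*)\big) = M_{\lambda}\big(V M^{(n)}_{\lambda}(T) V^*\big) = V M^{(n+1)}_{\lambda}(T) V^*,
\end{align*}
by the lemma applied to $M^{(n)}_{\lambda}(T)$. Finally, invoking the weak unitary invariance of $\omega$ (mentioned right after \eqref{1.1}) gives
\begin{align*}
\omega\big(M^{(n)}_{\lambda}(VTV^*)\big) = \omega\big(V M^{(n)}_{\lambda}(T) V^*\big) = \omega\big(M^{(n)}_{\lambda}(T)\big).
\end{align*}

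The only delicate point is verifying that $VUV^*$ is genuinely the partial isometry occurring in the canonical polar decomposition of $VTV^*$, which comes down to checking the kernel condition $\mathcal{N}(VUV^*) = \mathcal{N}(VTV^*)$; this is immediate from unitarity of $V$. Everything else reduces to clean algebraic manipulation and induction.
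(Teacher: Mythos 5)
Your proposal is correct and follows essentially the same route as the paper: both verify that $VUV^*$ together with $V|T|V^*=|VTV^*|$ gives the canonical polar decomposition of $VTV^*$ (checking the partial-isometry and kernel conditions), deduce $(VTV^*)^D = VT^DV^*$ and hence $M_{\lambda}(VTV^*)=VM_{\lambda}(T)V^*$, iterate by induction, and finish with the weak unitary invariance of $\omega$. No gaps to report.
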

\begin{proof}
Let $T=U|T|$ be the canonical polar decomposition of $T$. Put $U_{0} = VUV^*$.
Since $V$ is a unitary operator and $U$ is a partial isometry, we have
\begin{align*}
U_{0}U^*_{0}U_{0} = (VUV^*)(VU^*V^*)(VUV^*) = VUU^*UV^* = VUV^* = U_{0}.
\end{align*}
Thus $U_{0}$ is a partial isometry.
Now, let $x\in \mathcal{N}(U_{0})$. Hence $VUV^*x = 0$. Since $V$ is a unitary operator,
we obtain $UV^*x = 0$, or equivalently, $V^*x\in \mathcal{N}(U) = \mathcal{N}(T)$.
Thus $TV^*x=0$ and so $x\in \mathcal{N}(VTV^*)$.
This implies $\mathcal{N}(U_{0})\subseteq\mathcal{N}(VTV^*)$.
Similarity, we have $\mathcal{N}(VTV^*)\subseteq\mathcal{N}(U_{0})$.
Thus $\mathcal{N}(U_{0})=\mathcal{N}(VTV^*)$.
Furthermore,
\begin{align*}
VTV^* = VU|T|V^* = VUV^*V|T|V^* = U_{0}(V|T|V^*) = U_{0}|VTV^*|.
\end{align*}
Therefore $U_{0}|VTV^*|$ is the polar decomposition of $VTV^*$.
So, we get
\begin{align*}
M_{\lambda}(VTV^*) &= \lambda (VTV^*) + (1-\lambda)(VTV^*)^D
\\&= \lambda U_{0}|VTV^*| + (1-\lambda)|VTV^*|U_{0}
\\&= \lambda VUV^*V|T|V^* + (1-\lambda)V|T|V^*VUV^*
\\&= V\Big(\lambda U|T| + (1-\lambda)|T|U\Big)V^*
= VM_{\lambda}(T)V^*,
\end{align*}
and hence $M_{\lambda}(VTV^*) = VM_{\lambda}(T)V^*$.
Now, by induction on $n$ the equality
\begin{align}\label{P.3004.1}
M^{(n)}_{\lambda}(VTV^*)=VM^{(n)}_{\lambda}(T)V^*,
\end{align}
holds immediately.
Since $\omega(\cdot)$ is a weakly unitarily invariant norm on $\mathbb{B}(\mathcal{H})$,
by (\ref{P.3004.1}) the result follows.
\end{proof}
Let us recall that by \cite[Lemma 2.4]{Zh.Ho.He} we have
\begin{align}\label{T.10004.1}
\omega(x \otimes y) = \frac{|\langle x, y\rangle| + \|x\|\,\|y\|}{2},
\end{align}
for all $x, y \in\mathcal{H}$.
Here, $x \otimes y$ denotes the rank one operator in $\mathbb{B}(\mathcal{H})$ defined by
$(x \otimes y)z := \langle z, y\rangle x$ for all $z \in\mathcal{H}$.
Notice that $(x \otimes y)^* = y \otimes x$, $r(x\otimes y) = |\langle x, y\rangle|$ and if $T\in\mathbb{B}(\mathcal{H})$,
then $T(x \otimes y) = Tx \otimes y$.

In the following theorem, we show that the numerical radius of the sequence of
the iterated $\lambda$-mean transform of a rank one operator
converges to its spectral radius.
\begin{theorem}\label{T.10004}
Let $x, y\in \mathcal{H}$ and let $\lambda \in (0, 1)$.
Then
\begin{align*}
\displaystyle{\lim_{n\rightarrow +\infty}} w\big(M^{(n)}_{\lambda}(x\otimes y)\big) = r(x\otimes y).
\end{align*}
\end{theorem}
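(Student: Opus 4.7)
The plan is to reduce the problem to an explicit iteration in $\mathcal{H}$. Writing $T=x\otimes y$ (and assuming $x,y\neq 0$; else the claim is trivial), I would first compute the canonical polar decomposition of $T$. Since $T^{\ast}T=\|x\|^{2}(y\otimes y)$, we get $|T|=\tfrac{\|x\|}{\|y\|}(y\otimes y)$ and the partial isometry is $U=\tfrac{1}{\|x\|\|y\|}(x\otimes y)$ (it carries $\hat y$ to $\hat x$ and vanishes on $y^{\perp}$). A direct computation then gives
\[
T^{D}=|T|U=\frac{\langle x,y\rangle}{\|y\|^{2}}\,(y\otimes y),
\]
and since $\alpha u\otimes y+\beta v\otimes y=(\alpha u+\beta v)\otimes y$ for any $u,v\in\mathcal{H}$, one obtains
\[
M_{\lambda}(x\otimes y)=\Bigl(\lambda x+(1-\lambda)\tfrac{\langle x,y\rangle}{\|y\|^{2}}\,y\Bigr)\otimes y=\bigl(\lambda x+(1-\lambda)Px\bigr)\otimes y,
\]
where $P$ is the orthogonal projection onto $\mathrm{span}(y)$.

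Next I would iterate: writing $x=x_{\parallel}+x_{\perp}$ with $x_{\parallel}=Px$, a short induction shows that
\[
M^{(n)}_{\lambda}(x\otimes y)=(\lambda^{n}x_{\perp}+x_{\parallel})\otimes y\qquad(n\geq 0).
\]
Indeed, if $M^{(n)}_{\lambda}(x\otimes y)=z_{n}\otimes y$, then the previous paragraph applied to $z_{n}\otimes y$ gives $M^{(n+1)}_{\lambda}(x\otimes y)=(\lambda z_{n}+(1-\lambda)Pz_{n})\otimes y$, and $P$-invariance of $x_{\parallel}$ combined with $P(x_{\perp})=0$ propagates the formula $z_{n}=\lambda^{n}x_{\perp}+x_{\parallel}$.

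Then I would apply formula (\ref{T.10004.1}) to compute the numerical radius explicitly. Since $x_{\perp}\perp y$ we have $\langle\lambda^{n}x_{\perp}+x_{\parallel},y\rangle=\langle x,y\rangle$, and since $x_{\perp}\perp x_{\parallel}$ we have $\|\lambda^{n}x_{\perp}+x_{\parallel}\|^{2}=\lambda^{2n}\|x_{\perp}\|^{2}+\|x_{\parallel}\|^{2}$. Combined with $\|x_{\parallel}\|=|\langle x,y\rangle|/\|y\|$, this yields
\[
\omega\bigl(M^{(n)}_{\lambda}(x\otimes y)\bigr)=\frac{|\langle x,y\rangle|+\|y\|\sqrt{\lambda^{2n}\|x_{\perp}\|^{2}+\|x_{\parallel}\|^{2}}}{2}.
\]
Since $\lambda\in(0,1)$, letting $n\to\infty$ kills the $\lambda^{2n}$ term, so the right-hand side tends to $\tfrac{1}{2}\bigl(|\langle x,y\rangle|+\|y\|\cdot|\langle x,y\rangle|/\|y\|\bigr)=|\langle x,y\rangle|=r(x\otimes y)$, as claimed.

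The only delicate point is the polar-decomposition computation in Step~1, where one must be careful to choose the partial isometry $U$ so that $\mathcal{N}(U)=\mathcal{N}(T)=y^{\perp}$ (rather than some other partial isometry that also satisfies $T=U|T|$ on $\overline{\mathcal{R}(|T|)}$). Once that is in hand, the rest reduces to a one-line induction and direct substitution into (\ref{T.10004.1}); the degenerate cases $x=0$, $y=0$, or $\langle x,y\rangle=0$ are either trivial or readily checked by the same formula.
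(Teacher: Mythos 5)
Your proposal is correct and follows essentially the same route as the paper: compute the canonical polar decomposition of $x\otimes y$, obtain $T^D=\frac{\langle x,y\rangle}{\|y\|^2}\,y\otimes y$, identify the iterates $M^{(n)}_{\lambda}(x\otimes y)$ as rank-one operators by induction, and finish with the rank-one numerical radius formula (\ref{T.10004.1}). Your parametrization $z_n=\lambda^n x_{\perp}+x_{\parallel}$ is just a rewriting of the paper's $z_n=\lambda^n x+\frac{(1-\lambda^n)\langle x,y\rangle}{\|y\|^2}\,y$, so the two computations coincide.
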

\begin{proof}
We may and shall assume that $x, y \neq 0$.
Put $T = x\otimes y$ and $U = \frac{1}{\|x\|\|y\|}x\otimes y$.
Clearly, $\mathcal{N}(U) =\mathcal{N}(T)$ and
\begin{align*}
U^*U = \big(\frac{1}{\|x\|\|y\|}y\otimes x\big)\big(\frac{1}{\|x\|\|y\|}x\otimes y\big) = \frac{1}{\|y\|^2}y\otimes y
\end{align*}
is an orthogonal projection. Further,
\begin{align*}
|T|^2 = T^*T = \big(y\otimes x\big)\big(x\otimes y\big) = \|x\|^2y\otimes y
= \left(\frac{\|x\|}{\|y\|}y\otimes y\right)^2,
\end{align*}
and so $|T| = \frac{\|x\|}{\|y\|}y\otimes y$.
Since
\begin{align*}
U|T| = \left(\frac{1}{\|x\|\|y\|}x\otimes y\right)\left(\frac{\|x\|}{\|y\|}y\otimes y\right) = x\otimes y,
\end{align*}
from this we conclude that $U|T|$ is the canonical polar decomposition of $x\otimes y$.
Therefore,
\begin{align*}
(x\otimes y)^D = |T|U = \left(\frac{\|x\|}{\|y\|}y\otimes y\right)\left(\frac{1}{\|x\|\|y\|}x\otimes y\right)
= \frac{\langle x, y\rangle}{\|y\|^2}y\otimes y.
\end{align*}
Hence
\begin{align*}
M_{\lambda}(x\otimes y) = \lambda x\otimes y + (1-\lambda)\frac{\langle x, y\rangle}{\|y\|^2}y\otimes y
= \left(\lambda x + \frac{(1-\lambda)\langle x, y\rangle}{\|y\|^2}y\right)\otimes y.
\end{align*}
Now, by induction on $n$ we conclude that
\begin{align*}
M^{(n)}_{\lambda}(x\otimes y) = \left(\lambda^n x + \frac{(1-\lambda^n)\langle x, y\rangle}{\|y\|^2}y\right)\otimes y.
\end{align*}
Thus, by (\ref{T.10004.1}), we obtain
\begin{align*}
\omega\big(M^{(n)}_{\lambda}(x\otimes y)\big)
= \frac{\left|\left\langle \lambda^n x + \frac{(1-\lambda^n)\langle x, y\rangle}{\|y\|^2}y, y\right\rangle\right|
+ \left\|\lambda^n x + \frac{(1-\lambda^n)\langle x, y\rangle}{\|y\|^2}y\right\|\|y\|}{2},
\end{align*}
which implies
\begin{align*}
\omega\big(M^{(n)}_{\lambda}(x\otimes y)\big)
= \frac{|\langle x, y\rangle| + \sqrt{\lambda^{2n} \|x\|^2\|y\|^2 + (1-\lambda^{2n})|\langle x, y\rangle|^2}}{2}.
\end{align*}
From this it follows that
$\displaystyle{\lim_{n\rightarrow +\infty}} w\big(M^{(n)}_{\lambda}(x\otimes y)\big) = |\langle x, y\rangle|$
and the proof is completed.
\end{proof}
\begin{remark}\label{R.11004}
Let $T\in\mathbb{B}(\mathcal{H})$ and let $\lambda \in (0, 1)$.
Utilizing a similar argument as in Theorem \ref{T.8003}, we can see that
$\omega\big(M_{\lambda}(T)\big) = \omega(T)$ if and only if there exists
a sequence of unit vectors $\{x_n\}$ in $\mathcal{H}$ such that
$\displaystyle{\lim_{n\rightarrow +\infty}}\langle x_n, Tx_n\rangle\,\langle T^Dx_n, x_n\rangle = \omega^2(T)$.
\end{remark}
\bibliographystyle{amsplain}

\end{document}